\numberwithin{equation}{section}
\newtheorem{thm}{Theorem}[section]
\newtheorem{lemma}[thm]{Lemma}
\newtheorem{prop}[thm]{Proposition}
\newtheorem{cor}[thm]{Corollary}
\newcommand{\Q}{\mathbb{Q}}
\newcommand{\R}{\mathbb{R}}
\newcommand{\Z}{\mathbb{Z}}
\newcommand{\N}{\mathbb{N}}
\newcommand{\p}{\text{proj}}
\begin{document}

\title[Projections of self-affine carpets]{The Hausdorff dimension of the projections of self-affine carpets}

\author{Andrew Ferguson}
\address{Andrew Ferguson\\Mathematics Institute\\Zeeman Building\\
University of Warwick\\
Coventry\\CV4 7AL\\UK.}
\email{a.j.ferguson@warwick.ac.uk}

\author{Thomas Jordan}
\address{Thomas Jordan\\Department of Mathematics\\University of Bristol\\University Walk\\Clifton\\Bristol BS8 1TW\\UK }
\email{Thomas.Jordan@bris.ac.uk}

\author{Pablo Shmerkin}
\address{Pablo Shmerkin\\School of Mathematics and Centre for Interdisciplinary Computational and Dynamical Analysis, Alan Turing Building \\
University of Manchester, Oxford Road\\
Manchester M13 9PL\\ UK.}
\email{Pablo.Shmerkin@manchester.ac.uk}

\thanks{P.S. acknowledges support from EPSRC grant EP/E050441/1 and the University of Manchester.}

\subjclass[2000]{Primary  28A80, 28A78}
\keywords{Hausdorff dimension, orthogonal projection, self-affine carpet}

\date{\today}

\maketitle

\begin{abstract}
{We study the orthogonal projections of a large class of self-affine carpets, which contains the carpets of Bedford and McMullen as special cases. Our main result is that if $\Lambda$ is such a carpet, and certain natural irrationality conditions hold, then every orthogonal projection of $\Lambda$ in a non-principal direction has Hausdorff dimension $\min(\gamma,1)$, where $\gamma$ is the Hausdorff dimension of $\Lambda$. This generalizes a recent result of Peres and Shmerkin on sums of Cantor sets.
	
}
\end{abstract}

\section{Introduction and statement of results}

A basic problem in fractal geometry is to understand how the Hausdorff dimension of a set behaves under orthogonal projections. Results which are valid for almost every projection are well-known, going back to the celebrated Marstrand Projection Theorem \cite{Marstrand1954}: let $\Lambda\subset\R^2$ be a Borel set, and denote the orthogonal projection onto a line making angle $\theta$ with the origin by $\p_\theta$. Then
\[
\dim_H(\p_\theta(\Lambda)) = \min(\dim_H(\Lambda),1) \qquad \text{ for almost every } \theta,
\]
where $\dim_H$ stands for Hausdorff dimension. See \cite[Chapter 9]{Mattila1995}, \cite{PeresSchlag2000} and references therein for many extensions of Marstrand's Theorem. We underline that in all cases, the proof is non-constructive and gives no indication of what the exceptional set of directions may look like (other than giving a bound on its dimension). This motivates the following general question: if $\Lambda\subset\mathbb{R}^2$ is dynamically defined, is it possible to determine the set of exceptional directions in Marstrand's Theorem explicitly?

Recall that the one-dimensional Sierpi\'{n}ski gasket $\mathcal{S}$ is defined as
\[
\mathcal{S} = \left\{\sum_{i=1}^\infty 3^{-i} a_i : a_i \in \{ (0,0),(1,0),(0,1)\}\right\}.
\]
A question, attributed to Furstenberg, is whether $\p_\theta(\mathcal{S})$ has Hausdorff dimension one for all $\theta$ with irrational slope; this problem remains open. On the other hand, Moreira (see \cite{Moreira1998}) succeeded in showing that for the cartesian product of certain non-linear dynamically defined Cantor sets on the real line, there are no exceptional directions, other than $\theta=0, \pi/2$ which are trivial ones (his motivation was to find the dimension of the arithmetic sum of two such Cantor sets; note that the arithmetic sum $A+B$ is affinely equivalent to a projection $\p_{\pi/4}(A\times B)$). Peres and Shmerkin \cite{PeresShmerkin2009} obtained an analogous result for products of linear self-similar sets and for self-similar sets in the plane. The main motivating class of examples in their work are the product sets $\Lambda = C_a \times C_b$, where $C_s$ is the central Cantor set which is obtained by replacing the unit interval $[0,1]$ by the union $[!
 0,s]\cup [1-s,1]$ and iterating. All these positive results require an appropriate irrationality assumption; for $\Lambda=C_a\times C_b$ this reduces to $\log b/\log a$ being an irrational number. Very recently, Hochman and Shmerkin \cite{HochmanShmerkin2009} introduced a general approach that unifies and extends all these results.

In the present work we continue this line of research. We focus on a family of dynamically defined fractals generally known as \textbf{self-affine carpets}. These carpets are defined by replacing the unit square $Q$ with a union of pairwise non-overlapping rectangles $\{ S_i(Q)\}$ satisfying some geometric arrangement, and iterating inside each $S_i(Q)$; here the $S_i$ are affine maps with a diagonal linear part. A simple model of self-affine carpets was introduced by Bedford \cite{Bedford1984} and McMullen\cite{McMullen1984}, who independently found a formula for their Hausdorff dimension. Figure \ref{fig:mcmullen} shows a typical Bedford-McMullen carpet. We will be concerned with two more general classes of self-affine carpets, the class studied by Gatzouras and Lalley \cite{GatzourasLalley1992}, and the class recently introduced by Bara\'{n}ski \cite{Baranski2007}; precise definitions are given below. Our main result is that if $\Lambda$ is either a Gatzouras-Lalley or a Bara\'{n}ski carpet, and a natural irrationality condition holds, then
\[
\dim_H(\p_\theta(\Lambda)) = \min(\dim_H(\Lambda),1) \qquad \text{ for all } \theta\in (0,\pi)\setminus\{\pi/2\}.
\]
(See Theorem \ref{main} below for the precise statement). In other words, for these carpets the only possible exceptional directions in Marstrand's Theorem are $0$ and $\pi/2$; it will be clear from the definitions that these directions can indeed be exceptional. We underline that the aforementioned results in \cite{Moreira1998}, \cite{PeresShmerkin2009}, \cite{HochmanShmerkin2009} all rely on the conformality of the underlying constructions. The self-affine carpets we study are intrinsically non-conformal.
\begin{figure}
\begin{center}
\includegraphics[width=0.8\textwidth]{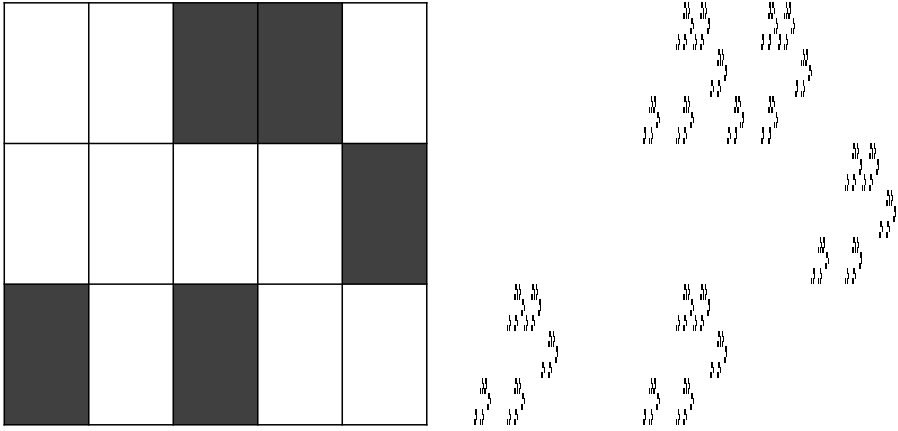}
\end{center}
\caption{A generating pattern of a Bedford-McMullen carpet (left) and the associated invariant set $\Lambda$ (right).}\label{fig:mcmullen}
\end{figure}

Recall that an \textbf{iterated function system} on $\mathbb{R}^d$ is a finite collection of maps $\{S_1,\ldots, S_k\}$ from $\R^d$ into itself which are strictly contractive, i.e. there exists $0<c<1$ such that $|S_i(x)-S_i(y)|<c|x-y|$ for all $x,y\in\mathbb{R}^d$ and all $i$. It is well-known that there exists a unique non-empty compact set $\Lambda$, called the \textbf{attractor} or \textbf{invariant set} of the iterated function system, such that $\Lambda = \cup_i S_i(\Lambda)$. In the case in which all the $S_i$ are homotheties, $\Lambda$ is called a \textbf{self-similar set}. We will be concerned with the case in which all the $S_i$ are affine maps, in which case $\Lambda$ is known as a \textbf{self-affine set}. The reader is referred to \cite{Falconer2003} for further background on iterated function systems and self-similar sets.

One reason why self-affine carpets are of interest is that they are among the simplest planar constructions which are not self-similar. They exhibit phenomena not present in the self-similar setting, like the non-coincidence of Hausdorff and packing dimension (see \cite{GatzourasLalley1992}) or the infinitude of Hausdorff measure in the critical dimension \cite{Peres1994}. Our results suggest that from the point of view of orthogonal projections, these carpets behave rather like products of self-similar sets; compare Theorem 1.1 with \cite[Theorem 5]{PeresShmerkin2009}.

Let us give the precise definitions of the constructions we will consider. The first type was introduced by Gatzouras and Lalley in \cite{GatzourasLalley1992}. For this construction we fix positive integers $m, n_1, n_2, \ldots, n_m$ and set $D=\{(i,j)\,\,:\,\,1\leq i \leq m,\,\, 1\leq j \leq n_i\}$.  Let $\Lambda$ be the unique non-empty compact set satisfying
\begin{equation}\nonumber \Lambda = \bigcup_{(i,j)\in D} S_{i j} \Lambda, \end{equation}
where the map $S_{i j}$ is of the form
\begin{equation}\nonumber S_{i j}(x,y)=(a_{i j}x+c_{i j},b_i y+d_i).\end{equation}
We impose the following conditions on the maps $S_{i j}$: $0<a_{i j}< b_i<1$ for each pair $(i,j)$, and the intervals $\{b_i I + d_i\}_{i=1}^m$, $\{a_{i j} I + c_{i j}\}_{j=1}^{n_i}$ have disjoint interiors, where $I=[0,1]$ is the unit interval; see Figure \ref{fig:glb}. For any $n\in\N$, we will let
\[
\mathbb{P}^n=\left\{(p_1,\ldots,p_n):p_i\geq 0\text{ and }\sum_{i=1}^np_i=1\right\}
\]
be the space of all probability vectors with $n$ elements.
It is proven in \cite{GatzourasLalley1992} (albeit with a slightly different formulation) that the Hausdorff dimension of $\Lambda$ is given by
\begin{equation}
\label{dimH} \dim_H \Lambda =\sup_{\textbf{p}\in \mathbb{P}^m}\left\{\frac{\sum_{i=1}^m p_i \log p_i}{\sum_{i=1}^m p_i\log b_i}+t(\textbf{p})\right\},
\end{equation}
 where $t(\textbf{p})$ is the unique real number satisfying
\begin{equation}
\nonumber\sum_{i=1}^m p_i \log \left(\sum_{j=1}^{n_i} a_{i j}^{t(\textbf{p})}\right)=0.
\end{equation}
We will say a Gatzouras-Lalley construction is of \textbf{irrational type} if either
\begin{enumerate}
\item\label{it1}
There exists $(i,j)\in D$ with $\frac{\log a_{ij}}{\log b_i}\notin\Q$.
\item\label{it2}
There exist $i,j,k$ with $\frac{\log a_{ij}}{\log b_k}\notin\Q$ and $\frac{\log a_{ij}}{\log b_i}$ is not constant for all $i,j$.
\end{enumerate}
A special case of the Gatzouras -Lalley construction is the situation where $n_1=n_2=\cdots=n_m$ and there are constants $0<a<b<1$ such that each $a_{ij}=a$ and $b_i=b$. We will refer to this as the \textbf{homogeneous uniform fibre} case.

\begin{figure}
\begin{center}
\includegraphics[width=0.8\textwidth]{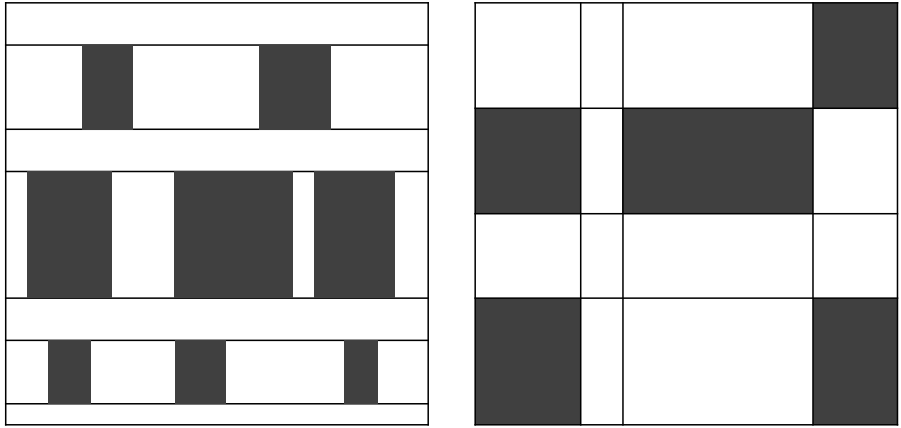}
\caption{Generating patterns for a Gatzouras-Lalley carpet (left) and a Bara\'{n}ski carpet (right).} \label{fig:glb}
\end{center}
\end{figure}

An alternative construction was considered by Bara\'{n}ski in \cite{Baranski2007}. For this construction fix positive integers $m,n$ and let
$D\subseteq\{(i,j):1\leq i\leq n\text{ and }1\leq j\leq m\}$. For each $1\leq i\leq n$ and $1\leq j\leq m$ we fix values $0<a_i, b_j<1$ such that $\sum_{i=1}^n a_i=\sum_{j=1}^m b_j=1$. We then consider the affine iterated function system defined by the maps
\[
 S_{i,j}(x,y)=\left(a_ix+\sum_{l=1}^{i-1}a_l,b_j y+\sum_{l=1}^{j-1}b_l\right)
\]
for each $(i,j)\in D$. Again we let $\Lambda$ be the unique non-empty compact set satisfying
\begin{equation}\nonumber \Lambda = \bigcup_{(i,j)\in D} S_{i j} \Lambda. \end{equation}
(See Figure \ref{fig:glb}). We remark that Bedford-McMullen carpets are the special case of the Bara\'{n}ski construction in which $a_i=1/n$ and $b_j=1/m$ for all $i, j$.

In \cite{Baranski2007} Bara\'{n}ski computed the dimension of such attractors. To state his result we denote the elements of a probability vector $\textbf{p}\in\mathbb{P}^{|D|}$ by $p_{ij}$ where $|D|$ denotes the cardinality of $D$. For $1\leq i\leq n$ let
$q_i(\textbf{p})=\sum_{j:(i,j)\in D} p_{ij}$ and for $1\leq j\leq m$ let $r_j(\textbf{p})=\sum_{i:(i,j)\in D} p_{ij}$. We now define
$$d_x=\sup_{\textbf{p}\in\mathbb{P}^{|D|}}\left\{\frac{\sum_{i=1}^n q_i(\textbf{p})\log q_i(\textbf{p})}{\sum_{i=1}^nq_i(\textbf{p})\log a_i}+\frac{\sum_{ij} p_{ij}\log\left(\frac{p_{ij}}{q_i(\textbf{p})}\right)}{\sum_{j=1}^m r_j(\textbf{p})\log b_j}\right\},$$
and
$$d_y=\sup_{\textbf{p}\in\mathbb{P}^{|D|}}\left\{\frac{\sum_{j=1}^m r_j(\textbf{p})\log r_j(\textbf{p})}{\sum_{j=1}^mr_j(\textbf{p})\log b_j}+\frac{\sum_{ij} p_{ij}\log\left(\frac{p_{ij}}{r_j(\textbf{p})}\right)}{\sum_{i=1}^n q_i(\textbf{p})\log a_i}\right\}.$$
Bara\'{n}ski showed that
$$\dim_H\Lambda=\max\{d_x,d_y\}.$$

We will call the system of \textbf{irrational type} if either:
\begin{enumerate}
\item  We can find $(i,j)\in D$ such that $\frac{\log a_i}{\log b_j}\notin\Q$,
\item  We can find $(i,j)\notin D$ such that $\frac{\log a_i}{\log b_j}\notin\Q$ and there exists $(i,j),(k,l)\in D$ with $\frac{\log a_i}{\log b_j}\neq \frac{\log a_k}{\log b_l}$.
\end{enumerate}

We now state our main result:
\begin{thm} \label{main}
Let $\Lambda$ be the attractor for an iterated function system defined by the Gatzouras-Lalley construction or the Bara\'{n}ski construction. If the system is of irrational type, then
\begin{equation}\nonumber\dim_H\left(\text{proj}_\theta \Lambda \right)=\min(\dim_H\left(\Lambda\right),1)\end{equation}
for all $ \theta\in (0,\pi)\setminus\{\pi/2\}$.\label{thm1}\end{thm}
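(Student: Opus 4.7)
The overall approach is to approximate $\Lambda$ from inside by self-similar sets and then apply the theorem of Peres-Shmerkin \cite{PeresShmerkin2009} (or its extension by Hochman-Shmerkin \cite{HochmanShmerkin2009}). The plan is to show that for every $\varepsilon>0$ there exists a self-similar set $\Lambda_\varepsilon\subseteq\Lambda$, arising as the attractor of an IFS of similitudes built from compositions of maps in the original carpet IFS, such that $\dim_H\Lambda_\varepsilon\geq\min(\dim_H\Lambda,1)-\varepsilon$ and the defining similitudes inherit an irrationality property placing $\Lambda_\varepsilon$ within the scope of Peres-Shmerkin. Given such a $\Lambda_\varepsilon$, monotonicity $\p_\theta\Lambda_\varepsilon\subseteq\p_\theta\Lambda$ together with Peres-Shmerkin applied to $\Lambda_\varepsilon$ would give $\dim_H\p_\theta\Lambda\geq\min(\dim_H\Lambda,1)-\varepsilon$ for every $\theta\in(0,\pi)\setminus\{\pi/2\}$; sending $\varepsilon\to 0$ completes the proof, since the reverse inequality is immediate.

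The construction of $\Lambda_\varepsilon$ would start from a probability vector $\mathbf{p}$ chosen so that the right-hand side of the dimension formula \eqref{dimH} (or Bara\'{n}ski's analogous formula) comes within $\varepsilon$ of $\min(\dim_H\Lambda,1)$, together with the associated Bernoulli measure $\nu_{\mathbf{p}}$ on $D^{\N}$. For large $n$, most $\nu_{\mathbf{p}}$-typical cylinders $u=u_1\cdots u_n$ have horizontal and vertical contractions $\alpha(u)$ and $\beta(u)$ (products of the relevant $a$'s and $b$'s) concentrated near their Lyapunov averages. In the Gatzouras-Lalley setting the horizontal average is strictly smaller than the vertical one, so no single $S_u$ is a similitude; one would therefore select a subfamily $\mathcal{G}$ of typical cylinders whose ratios $\alpha(u)/\beta(u)$ are approximately constant and whose cardinality still grows at essentially the optimal exponential rate, and then post-compose with an extra ``filler'' iteration inside a single row to equalise the two contractions up to a bounded multiplicative factor. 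The resulting sub-IFS is bi-Lipschitz conjugate to a genuine IFS of similitudes whose attractor is the desired $\Lambda_\varepsilon$.

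The irrationality hypothesis enters in two places: first, to show that the ratios $\alpha(u)/\beta(u)$ attainable among typical cylinders are dense enough near the target to allow the balancing step without losing the exponential growth of $|\mathcal{G}|$; and second, to guarantee that the similitudes defining $\Lambda_\varepsilon$ have log-contractions with irrational ratios, which is the condition required to apply Peres-Shmerkin in the no-rotation case. The principal obstacle is the combinatorial-ergodic construction of $\mathcal{G}$: one must simultaneously balance the two Lyapunov exponents, preserve near-maximal exponential cardinality, and retain a usable irrationality property. The Gatzouras-Lalley case is the more delicate one, since the vertical contractions $b_i$ depend only on the row index, sharply restricting the available balancing mechanisms; the two clauses in the definition of \emph{irrational type} correspond naturally to two qualitatively different regimes in which this balancing can be carried out, suggesting that the proof splits into two cases of the Gatzouras-Lalley construction and then a separate (and cleaner) treatment of the Bara\'{n}ski construction.
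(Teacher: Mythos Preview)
Your strategy cannot succeed as stated: there is no self-similar subset of a (non-trivial) Gatzouras--Lalley carpet with dimension close to $\dim_H\Lambda$, and your ``balancing'' step does not produce one. In the Gatzouras--Lalley setting every generating map satisfies $a_{ij}<b_i$, hence every finite composition $S_{u_1}\circ\cdots\circ S_{u_n}$ has horizontal contraction $\alpha(u)=\prod a_{i_kj_k}$ strictly smaller than its vertical contraction $\beta(u)=\prod b_{i_k}$. Post-composing with further maps from the system only pushes the ratio $\alpha/\beta$ further below $1$, so you can never ``equalise the two contractions''; no sub-IFS built from these compositions consists of similitudes. What you can arrange is that all chosen compositions share the \emph{same} diagonal linear part $\mathrm{diag}(a,b)$ with $a<b$, but the resulting attractor is then a homogeneous uniform-fibre self-affine carpet, not a self-similar set, and it is not bi-Lipschitz conjugate to one (for instance, no linear conjugacy can turn $\mathrm{diag}(a,b)$ into a scalar matrix, and nonlinear bi-Lipschitz maps do not preserve the IFS structure needed to invoke Peres--Shmerkin). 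The paper stresses exactly this point in the introduction: the results of \cite{PeresShmerkin2009} and \cite{HochmanShmerkin2009} rely on conformality, which is absent here.

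The paper's proof follows your outer shell (approximate from inside, then project) but the approximating sets $\Lambda'$ are themselves homogeneous uniform-fibre \emph{self-affine} carpets with $\log a/\log b\notin\Q$; this is Lemma~\ref{approx}, whose proof is close to your first paragraph. The essential work is then Proposition~\ref{keyprop}, which handles projections of such carpets directly and is not a consequence of Peres--Shmerkin. Its proof requires a genuinely new ingredient: a discrete projection theorem (Theorem~\ref{dpthm}) is applied to families of approximate squares that depend on an auxiliary string $\xi$, a double-counting argument (Lemma~\ref{dpthm2}) shows that for most angles most $\xi$ are ``good'', and a tree construction synchronised by an irrational rotation assembles these into a subset of $\Pi_\tau\Lambda$ of dimension close to $\gamma$. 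This replaces the product-structure argument in \cite{PeresShmerkin2009}, which breaks down precisely because the approximate squares are not translates of one another.
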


The Bara\'{n}ski class contains the products $C_a\times C_b$ as special cases, and also the cartesian products of more general self-similar sets in the line; hence, Theorem \ref{main} generalizes \cite[Theorem 2]{PeresShmerkin2009}. We underline that the proofs in \cite{PeresShmerkin2009} rely crucially on the product structure of $\Lambda=C_a\times C_b$. More precisely, in that paper the following fact is key: $\Lambda$ can be written as a union $\cup_i \Lambda_i$, where each $\Lambda_i$ is the image of $\Lambda$ under an affine map of small distortion. This structure breaks down already for Bedford-McMullen carpets such as the one depicted in Figure \ref{fig:mcmullen}. Thus, although we follow the general pattern of proof of \cite[Theorem 2]{PeresShmerkin2009}, we must also introduce new ideas to deal with the lack of product structure.

A key to the proof of Theorem \ref{main} is to first prove the homogeneous uniform fibre Gatzouras-Lalley case, and then use probabilistic ideas to deduce the general case.
\begin{prop}\label{keyprop}
Let $\Lambda$ be an attractor for a homogeneous uniform fibre Gatzouras-Lalley construction. If $\frac{\log a}{\log b}\notin\mathbb{Q}$ then \begin{equation}\nonumber\dim_H\left(\text{proj}_\theta \Lambda \right)=\min(\dim_H\left(\Lambda\right),1)\end{equation}
for all $ \theta\in (0,\pi)\setminus\{\pi/2\}.$
\end{prop}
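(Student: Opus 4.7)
The plan is to follow the general scheme of Peres--Shmerkin \cite{PeresShmerkin2009}, passing to the level of measures and exploiting a self-affine convolution identity. Fix a probability vector $\mathbf{p}=(p_{ij})_{(i,j)\in D}$ and let $\mu=\mu_{\mathbf{p}}$ denote the associated self-affine Bernoulli measure on $\Lambda$, chosen so that $\dim\mu$ approaches $\dim_H\Lambda$ (possible by \eqref{dimH}). Since $\dim\text{proj}_\theta\mu\le\min(\dim\mu,1)$ is automatic, it suffices to prove the matching lower bound for every such $\mu$.

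The key algebraic point, available precisely because of the homogeneous uniform fibre assumption, is that every $k$-fold composition $S_\mathbf{i}$ ($\mathbf{i}\in D^k$) has the \emph{same} linear part $\mathrm{diag}(a^k,b^k)$. A direct computation then gives
\[
\text{proj}_\theta\circ S_\mathbf{i}(x,y)\;=\;r_k\,\text{proj}_{\theta_k}(x,y)+c_\mathbf{i},
\]
where $r_k=\sqrt{a^{2k}\cos^2\theta+b^{2k}\sin^2\theta}$, $\tan\theta_k=(b/a)^k\tan\theta$, and $c_\mathbf{i}$ is a translation depending only on $\mathbf{i}$. Summing against the Bernoulli weights $p_\mathbf{i}=\prod p_{i_\ell j_\ell}$ yields the exact convolution identity
\[
\text{proj}_\theta\mu\;=\;\eta_k * \sigma_k,\qquad \eta_k:=\sum_{\mathbf{i}\in D^k}p_\mathbf{i}\,\delta_{c_\mathbf{i}},\quad \sigma_k:=(R_{r_k})_*\text{proj}_{\theta_k}\mu,
\]
where $R_{r_k}$ denotes dilation by $r_k$. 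This is the self-affine analog of the self-similar convolution structure used in \cite{PeresShmerkin2009}, with the critical twist that $\sigma_k$ is a projection in a \emph{different} direction $\theta_k\ne\theta$.

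The second step is a Frostman-energy argument: show that $I_s(\text{proj}_\theta\mu)<\infty$ for every $s<\min(\dim\mu,1)$. Using the convolution identity, the energy decomposes according to scales above and below $r_k\sim b^k|\sin\theta|$: at finer scales, $\text{proj}_\theta\mu$ locally resembles a translate of $\sigma_k$ (controlled by the $s$-energy of $\text{proj}_{\theta_k}\mu$), while at coarser scales it is well-approximated by the discrete measure $\eta_k$, which is a coarse-grained $\theta$-projection of $\mu$ itself. The irrationality hypothesis $\log a/\log b\notin\mathbb{Q}$ enters through Weyl equidistribution of $\{k\log(b/a)\bmod \log b^{-1}\}$, used to select an infinite sequence of iteration depths $k$ along which the scale $r_k$ and the atomic structure of $\eta_k$ separate cleanly, so that the arithmetic resonances which would otherwise obstruct the energy bound are avoided.

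The main obstacle, specific to the non-conformal setting, is that because $S_\mathbf{i}$ contracts more strongly in $x$ than in $y$, the tilted angle $\theta_k$ drifts monotonically toward the excluded direction $\pi/2$ as $k\to\infty$; in the conformal setting of \cite{PeresShmerkin2009} the analog of $\theta_k$ is merely rotated and no such degeneration occurs. Consequently, the bootstrap must be performed at carefully chosen finite depths, with all estimates made uniform over a compact family of angles bounded away from $\{0,\pi/2\}$, and the scale-tuning from irrationality used to supply admissible depths. Once the Frostman bound is established for every $s<\min(\dim\mu,1)$, the proposition follows by taking $s\uparrow\min(\dim\mu,1)$ and optimizing over $\mathbf{p}$ in \eqref{dimH}.
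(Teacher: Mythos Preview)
Your convolution identity $\text{proj}_\theta\mu=\eta_k*\sigma_k$ is algebraically correct, but the scheme built on it has a real gap which you identify yourself without resolving: since $0<a<b<1$, the tilted angle satisfies $\tan\theta_k=(b/a)^k\tan\theta\to\infty$, so $\theta_k\to\pi/2$ \emph{monotonically}. Any energy bootstrap of the form ``$I_s(\text{proj}_\theta\mu)$ is controlled by $I_s(\text{proj}_{\theta_k}\mu)$ plus a discrete part'' therefore feeds the recursion straight into the excluded direction, where $\text{proj}_{\pi/2}\Lambda$ may genuinely have smaller dimension. Weyl equidistribution of $\{k\log(b/a)\}$ modulo a period does not rescue this: that equidistribution governs the \emph{scale ratio} $a^k/b^k$, not the sequence of angles $\theta_k$, which is monotone and has no recurrence to exploit. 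Your remark about ``carefully chosen finite depths with uniform estimates over compact angle sets'' does not explain how to avoid the fact that every sufficiently deep iterate of the full IFS puts $\theta_k$ arbitrarily close to $\pi/2$.

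The paper's actual argument circumvents this by never iterating the full IFS. Instead it uses \emph{approximate squares}: level-$k$ rectangles of shape $a^{\ell(k)}\times b^k$ with $\ell(k)$ chosen so that $Z_k=a^{\ell(k)}b^{-k}\in[1,a^{-1})$. Renormalizing such a rectangle to unit size sends $\Pi_\tau$ to $\Pi_{\tau+T^j(0)}$ where $T$ is the irrational rotation by $\alpha=\log Z_k/(-\log a)$; now the effective ``angles'' equidistribute in a bounded interval and Marstrand-type input applies. The price is that different $x$-words $\xi\in\Sigma_m^{\ell(k)}$ produce genuinely different families $\mathcal{Q}_k(\xi)$ (there is no product structure), so one cannot write a clean convolution identity at the approximate-square scale. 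This is exactly why the paper needs the strengthened discrete projection theorem (allowing arbitrary subfamilies $\mathcal{Q}'$), Lemma~\ref{dpthm2} to get control for \emph{most} $\xi$ simultaneously, and the explicit tree construction of Lemma~\ref{tree}; these replace the measure-level convolution/energy argument that works in the conformal product case of \cite{PeresShmerkin2009}. Your proposal does not contain an analog of any of these ingredients.
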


In \cite{KenyonPeres1996}, Kenyon and Peres studied a generalization of Bedford-McMullen carpets which are modeled by sofic symbolic systems (Recall that a sofic system is a factor of a subshift of finite type on a finite alphabet). We can describe their construction as follows: fix integers $2\le m<n$. Let $X$ be a closed subset of the symbolic space $D^\mathbb{N}$, where $D=\{1,\ldots,n\}\times \{1,\ldots,m\}$, and assume $X$ is $T$-invariant and sofic, where $T$ is the shift map on $D^\mathbb{N}$. For $(i,j)\in D$, write
\[
S_{(i,j)}(x,y) = ((x+i)/n,(y+j)/m).
\]
Further, set
\begin{equation} \label{soficinvariant}
\Lambda = \Lambda(X) = \bigcap_{\ell=1}^\infty \bigcup_{ |\sigma|=\ell, \sigma\in X^*} S_{\sigma_1}\circ\cdots \circ S_{\sigma_\ell}([0,1]\times[0,1]),
\end{equation}
where $X^*$ is the set of all finite strings which can be continued to an infinite string in $X$. It is easy to see that in the case $X=(D')^\mathbb{N}$, where $D'\subset D$, the set $\Lambda$ is a Bedford-McMullen carpet; and thus a Bara\'{n}ski construction, and is of irrational type whenever $\log m/\log n\notin\mathbb{Q}$.

Kenyon and Peres obtained a formula for the Hausdorff dimension of such sofic affine-invariant sets in terms of some random matrix products \cite[Theorems 1.1 and 3.2]{KenyonPeres1996}. Of importance to us is that in the course of the proof of Theorem 3.2, Kenyon and Peres construct Bedford-McMullen carpets $\Lambda_j$ corresponding to a subset $D_j \subset \{0,\ldots, n^j\}\times \{0,\ldots, m^j\}$, such that $\Lambda_j\subset \Lambda$ for all $j$, and $\dim_H(\Lambda_j)\rightarrow \dim_H(\Lambda)$ as $j\rightarrow\infty$. Hence we have the following immediate corollary of Theorem \ref{main}:

\begin{cor}
Suppose that $\log m/\log n$ is irrational and $\Lambda$ is defined by \eqref{soficinvariant} for some sofic system $X$. Then
\[
\dim_H(\p_\theta\Lambda) = \min(\dim_H(\Lambda),1)
\]
for all $ \theta\in (0,\pi)\setminus\{\pi/2\}$.
\end{cor}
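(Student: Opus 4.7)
The plan is to combine Theorem \ref{main} with the Kenyon--Peres approximation described in the paragraph preceding the corollary. The upper bound $\dim_H(\p_\theta\Lambda)\le\min(\dim_H(\Lambda),1)$ is immediate: orthogonal projections are Lipschitz, so they do not increase Hausdorff dimension, and $\p_\theta\Lambda$ lies in a line. The content of the corollary is therefore the lower bound.

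For the lower bound, I would invoke the fact (established in the proof of \cite[Theorem~3.2]{KenyonPeres1996}) that there exist Bedford--McMullen carpets $\Lambda_j\subset\Lambda$, each defined from some subset $D_j\subset\{0,\ldots,n^j\}\times\{0,\ldots,m^j\}$, with $\dim_H(\Lambda_j)\to\dim_H(\Lambda)$ as $j\to\infty$. Each $\Lambda_j$ is a Bara\'nski construction in the sense of the paper, with contraction ratios $a_i=1/n^j$ and $b_j=1/m^j$; thus for any $(i,j)\in D_j$ one has
\[
\frac{\log a_i}{\log b_j}=\frac{\log n^j}{\log m^j}=\frac{\log n}{\log m}\notin\Q,
\]
so condition (1) of the irrationality definition for Bara\'nski systems is satisfied. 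Theorem \ref{main} therefore applies to each $\Lambda_j$.

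Using monotonicity of Hausdorff dimension under inclusion, for every $\theta\in(0,\pi)\setminus\{\pi/2\}$,
\[
\dim_H(\p_\theta\Lambda)\ge\dim_H(\p_\theta\Lambda_j)=\min(\dim_H(\Lambda_j),1).
\]
Letting $j\to\infty$ and using $\dim_H(\Lambda_j)\to\dim_H(\Lambda)$ together with continuity of $t\mapsto\min(t,1)$ gives $\dim_H(\p_\theta\Lambda)\ge\min(\dim_H(\Lambda),1)$, completing the proof.

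There is no serious obstacle here; the entire argument is a direct combination of Theorem \ref{main} (applied to the Bedford--McMullen, hence Bara\'nski, approximants) with the Kenyon--Peres internal approximation of sofic affine-invariant sets by Bedford--McMullen carpets, plus the trivial upper bound. The only point that requires a moment of attention is verifying that the irrationality hypothesis transfers from $\log m/\log n$ to each $\Lambda_j$, which is immediate as shown above.
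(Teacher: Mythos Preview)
Your argument is correct and is precisely the approach the paper intends: the corollary is stated as an immediate consequence of Theorem \ref{main} applied to the Kenyon--Peres Bedford--McMullen approximants $\Lambda_j\subset\Lambda$, together with $\dim_H(\Lambda_j)\to\dim_H(\Lambda)$. Your verification that each $\Lambda_j$ is of irrational type (since $\log n^j/\log m^j=\log n/\log m\notin\Q$) is exactly the point that needs to be checked, and you have done so.
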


When seen as a subset of the torus $T^2$, the set $\Lambda$ is invariant under the expanding toral endomorphism $A(x,y) = (mx, ny) \,(\text{mod}\,1)$. We do not know if the above corollary remains true if $\Lambda$ is an arbitrary closed $A$-invariant subset of the torus of (this would correspond to $X$ being an arbitrary subshift, not necessarily sofic).

Section \ref{sec:dpthm} contains a discrete projection theorem, while Section \ref{sec:keyprop} contains the proof of Proposition \ref{keyprop}. In Section \ref{sec:main} we then use probabilistic ideas and Proposition \ref{keyprop} to complete the proof of Theorem \ref{thm1}. Throughout the article, $\mathcal{L}$ will be used to denote Lebesgue measure.

\section{A discrete projection theorem}
\label{sec:dpthm}

The main idea behind the proof of Proposition \ref{keyprop} is that for a self-affine carpet $\Lambda$, and a fixed angle $\theta\in(0,\pi/2)$, we can construct a subset $\Lambda_\theta\subset \Lambda$, with the following properties:  $\Lambda_\theta$ may be written as an intersection over a nested family of compact sets, i.e. $\Lambda_\theta=\cap_k \cup_{Q\in\mathcal{Q}_k} Q$, such that for each $k$, the elements of the family $\{\text{proj}_\theta (Q)\,\,:\,\,Q\in\mathcal{Q}_k\}$ are suitably separated.  The following theorem gives sufficient conditions for the cardinality of the sets $\mathcal{Q}_k$ being large enough so that the resulting set $\Lambda_\theta$ has dimension that is arbitrarily close to that of $\Lambda$.  It plays a similar role to that of \cite[Proposition 7]{PeresShmerkin2009}, and is a variation on this result.

\begin{thm}Given constants $A>1, A_1,A_2>0$ and $\gamma\in (0,1)$, there exists a constant $\delta$ such that the following holds: Fix $\rho>0$.  Let $\mathcal{Q}$ be a collection of convex subsets of the unit disk such that each element contains a disk of radius $A^{-1}\rho$ and is contained in a disk of radius $A\rho$, and $Q,Q^{\prime}$ have disjoint interiors for any $Q,Q^{\prime}\in\mathcal{Q}$ with $Q\neq Q^{\prime}$. Furthermore suppose that $\mathcal{Q}$ has cardinality at least $\rho^{-\gamma}/A_1$, yet any disk of radius $\ell\in (\rho,1)$ intersects at most $A_2(\ell/\rho)^{\gamma}$ elements of $\mathcal{Q}$.

Then for any $\varepsilon>0$ there exists a set $J\subset [0,\pi)$ with the following properties:
\begin{enumerate}[(i)]
  \item $\mathcal{L}([0,\pi)\setminus J)\leq \varepsilon$.
  \item \label{separatedfamily} If $\theta\in J$, then for any family $\mathcal{Q}^{\prime}\subset\mathcal{Q}$ there exists a subcollection $\mathcal{Q}_1$ of $\mathcal{Q}^{\prime}$ of cardinality at least
  $$\delta\varepsilon\left(\frac{|\mathcal{Q}^{\prime}|}{|\mathcal{Q}|}\right)^{2}|\mathcal{Q}|,$$
  such that the elements of the family $\{\text{proj}_{\theta}(Q)\,:\,Q\in\mathcal{Q}_1\}$ are $\rho$-separated.
  \item $J$ is a finite union of open intervals.
\end{enumerate}\label{dpthm}
\end{thm}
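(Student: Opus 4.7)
The plan is to reduce the existence of a large $\rho$-separated subfamily of projections to a graph-theoretic independent-set problem, and to control the set of bad directions via a first-moment estimate. For each $Q\in\mathcal{Q}$ fix a center $x_Q$, and pick $C_1 = C_1(A)$ large enough that projections of two elements of $\mathcal{Q}$ in direction $\theta$ are $\rho$-separated whenever $|\text{proj}_\theta(x_Q) - \text{proj}_\theta(x_{Q'})| > C_1\rho$ (this is possible since each $Q$ has diameter $O(A\rho)$). Define the graph $G_\theta$ on vertex set $\mathcal{Q}$ whose edges are those pairs with $|\text{proj}_\theta(x_Q) - \text{proj}_\theta(x_{Q'})| \leq C_1\rho$, and write $e(\theta)$ for its edge count. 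Any independent set in $G_\theta|_{\mathcal{Q}'}$ then furnishes a valid $\mathcal{Q}_1$.

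The first key step is the integrated edge bound $\int_0^\pi e(\theta)\,d\theta \leq C\,|\mathcal{Q}|$, with $C = C(A,A_2,\gamma)$. For a single pair this reduces to a Crofton-type estimate: the set of $\theta$ for which $|\langle x_Q - x_{Q'}, (\cos\theta,\sin\theta)\rangle| \leq C_1\rho$ has measure $O(\min(1, \rho/|x_Q - x_{Q'}|))$. Fixing $Q$ and decomposing the other centers into dyadic shells $|x_{Q'} - x_Q| \in [2^k\rho, 2^{k+1}\rho)$, the hypothesis on disk-intersections bounds the number of $Q'$ in shell $k$ (while $2^{k+1}\rho \leq 1$) by $A_2\cdot 2^{(k+1)\gamma}$, while the per-pair measure bound is $O(2^{-k})$. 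The resulting series $\sum_k 2^{-k(1-\gamma)}$ converges since $\gamma < 1$, yielding an $O(1)$ contribution per $Q$; shells with $2^k\rho \geq 1$ contribute an additional $O(\rho\,|\mathcal{Q}|)$ term, and the close pairs with $|x_Q - x_{Q'}| < C_1\rho$ have uniformly bounded count by disjointness together with the inner-ball hypothesis. Summing over $Q$ gives the claim.

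Set $M := 2C|\mathcal{Q}|/\varepsilon$. By Markov's inequality, $J_0 := \{\theta \in [0,\pi) : e(\theta) \leq M\}$ has measure at least $\pi - \varepsilon/2$. Because each edge indicator uses the non-strict inequality $\leq C_1\rho$, each indicator, and hence $e(\theta)$, is upper semi-continuous in $\theta$; thus $J_0$ is open. Decompose $J_0$ into countably many disjoint open intervals and take a finite sub-union $J \subset J_0$ with $\mathcal{L}(J) \geq \pi - \varepsilon$, giving properties (i) and (iii). For $\theta \in J$ and any $\mathcal{Q}' \subset \mathcal{Q}$, the subgraph $G_\theta|_{\mathcal{Q}'}$ has at most $M$ edges, so the Caro--Wei inequality (a quantitative Tur\'an bound) produces an independent set of size at least $|\mathcal{Q}'|^2/(|\mathcal{Q}'| + 2M)$. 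Using $|\mathcal{Q}'| \leq |\mathcal{Q}|$ and substituting $M$, this is bounded below by $\frac{\varepsilon}{\varepsilon + 4C}\,\frac{|\mathcal{Q}'|^2}{|\mathcal{Q}|}$, which has the required form with $\delta = 1/(\pi + 4C)$.

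The main technical hurdle is the integrated edge bound: one has to dovetail the Crofton measure estimate with the cardinality growth hypothesis via dyadic bookkeeping, and the convergence of the resulting geometric series depends critically on $\gamma < 1$. Once this is in place, the remaining ingredients---Markov, upper semi-continuity (to deliver the open-set property (iii)), and Caro--Wei---are standard, and the hypothesis $|\mathcal{Q}| \geq \rho^{-\gamma}/A_1$ plays no role in this theorem per se; it enters only when the conclusion is applied in Section 3 to convert the surviving cardinalities into dimension estimates.
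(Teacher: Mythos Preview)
Your proof is correct but takes a genuinely different route from the paper. The paper works measure-theoretically: it puts normalized Lebesgue measure $\mu$ on $\bigcup\mathcal{Q}$, bounds the Riesz $1$-energy $I_1(\mu)\le C\rho^{\gamma-1}$ via the regularity hypothesis (citing \cite[Proposition~7]{PeresShmerkin2009}), and invokes Mattila's projection theorem to obtain $\int_0^\pi\|f_\theta\|_2^2\,d\theta\le C\rho^{\gamma-1}$ for the projected densities $f_\theta$. The good set $J$ is then defined by a threshold on $\|f_\theta\|_2^2$, and for $\theta\in J$ a Cauchy--Schwarz argument lower-bounds the Lebesgue measure of the support of the projection of $\mu|_{\cup\mathcal{Q}'}$, from which $(2A+1)\rho$-separated points are greedily extracted. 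Your integrated edge bound $\int e(\theta)\,d\theta\le C|\mathcal{Q}|$ is morally the same computation (both amount to summing $\min(1,\rho/|x_Q-x_{Q'}|)$ over pairs and controlling this via dyadic regularity), but you bypass the potential-theoretic machinery entirely and replace the Cauchy--Schwarz/support step with the Tur\'an--Caro--Wei independent-set bound; this is more elementary and more directly combinatorial, at the cost of being somewhat further from the classical projection-theory framework. Two small patches are needed: to conclude that $J_0$ is open from upper semi-continuity you should use that $e$ is integer-valued, so that $\{e\le M\}=\{e<\lfloor M\rfloor+1\}$; and for the far shells ($2^k\rho\ge 1$) you should observe that the disk-intersection hypothesis forces $|\mathcal{Q}|=O_{A_2,\gamma}(\rho^{-\gamma})$ (cover the unit disk by $O(1)$ disks of radius less than $1$), whence $\rho|\mathcal{Q}|=O(\rho^{1-\gamma})=O(1)$. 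Your observation that the hypothesis $|\mathcal{Q}|\ge \rho^{-\gamma}/A_1$ plays no role in the proof is correct, and in fact the paper's argument does not genuinely use it either.
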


\begin{proof}Throughout the course of the proof, $C_i$ denotes a constant which depends only on $A,A_1,A_2$ and $\gamma$.  Let $E$ be the union of elements in $\mathcal{Q}$, and let $\mu$ be normalized Lebesgue measure on $E$.  Consider the Riesz $1$-energy of $\mu$,
\begin{equation}\nonumber I_{1}(\mu)=\int\int |x-y|^{-1} d\mu(x) d\mu(y).\end{equation}  From  \cite[Proposition 7]{PeresShmerkin2009} we have that
\begin{equation}\nonumber I_{1}(\mu)\leq C_1\rho^{\gamma-1}.\end{equation}
Combining this with \cite[Theorem 9.7]{Mattila1995} yields
\begin{equation}\int_0^\pi \|d\mu_{\theta}/dx\|_2^2 d\theta \leq C_2 I_1(\mu)\leq C_3\rho^{\gamma-1},\label{density}\end{equation}
where $\mu_\theta$ is the push forward of $\mu$ under the map $P_{\theta}=R_{-\theta}\circ\text{proj}_\theta$, where $R_{-\theta}$ denotes a rotation of angle $-\theta$ about the origin.  Here $\| \cdot \|_2$ denotes the usual $L^2$ norm.

Let $f_{\theta}=d\mu_{\theta}/dx$ denote the density of $\mu_{\theta}$.  We claim that $\theta\mapsto \|f_\theta\|_2$ is continuous. By the dominated convergence theorem, it suffices to show that the density $f_{\theta}(x)$ is continuous for all $x$. Since
\begin{equation}
\nonumber f_\theta(x)=\sum_{Q\in\mathcal{Q}} f_{\theta,Q}(x),\end{equation}
where $f_{\theta,Q}$ is the density of the push forward of the measure $\mu|_{Q}$ under $P_{\theta}$, it is enough to verify that each $f_{\theta,Q}(x)$ is continuous, which follows from the convexity of $Q$.  Let

\begin{equation}
\nonumber J=\{\theta\,:\,\|f_\theta\|_2^2<\varepsilon^{-1} C_3 \rho^{\gamma-1}\}.
\end{equation}
From (\ref{density}) we deduce that $\mathcal{L}([0,\pi)\setminus J)\leq \varepsilon.$   Since $\|f_\theta\|_2$ depends continuously on $\theta$, $J$ is open and therefore a finite union of open intervals. Hence it remains to prove \eqref{separatedfamily}.

Fix a non-empty subfamily $\mathcal{Q}^{\prime}$ of $\mathcal{Q}$ and write $\eta=|\mathcal{Q}^\prime|/|\mathcal{Q}|.$  Let $\nu$ be the restriction of $\mu$ to the union of the elements of $\mathcal{Q}^{\prime}$, and let $\{\nu_\theta\}$ be the corresponding projections of $\nu$, and $\{f^{\prime}_\theta\}$ their densities.  As each element $Q\in\mathcal{Q}$ contains a disk of radius $A^{-1}\rho$, and is contained in a disk of radius $A\rho$, we have that $\mu(Q)/\mu(Q^{\prime}) \geq A^{-4}$ for all $Q,Q^{\prime}\in \mathcal{Q}$.  Hence, we deduce that $\nu$ has total mass at least $A^{-4}\eta$.  It is also clear that
\begin{equation}
\label{inequality1}
\|f^{\prime}_\theta\|_2^{2}\leq \|f_\theta \|_2^{2} <\varepsilon^{-1} C_3 \rho^{\gamma-1},\end{equation}
for all $\theta\in J$.
On the other hand, using the Cauchy-Schwarz inequality,
\begin{equation}
\label{inequality2}
A^{-4}\eta \leq \nu(\mathbb{R}^{2} )=\nu_\theta (\text{Supp}(\nu_\theta)) \leq \mathcal{L}(\text{Supp}(\nu_\theta))^{1/2}\| f^{\prime}_\theta \|_2.
\end{equation}
From (\ref{inequality1}) and (\ref{inequality2}) we deduce that if $\theta\in J$ then
\begin{equation}
\nonumber
\mathcal{L}(\text{Supp}(\nu_\theta)) \geq C_4 \varepsilon \eta^{2} \rho^{1-\gamma}.
\end{equation}
For such $\theta\in J$ we may select a set of points $E_{\theta}\subset \text{Supp}(\nu_\theta)$ of cardinality at least $\delta\varepsilon\eta^{2}|\mathcal{Q}|$, for which the elements are $(2A+1)\rho$-separated, where $\delta=\frac{C_4}{2 (2A+1)}$.  For each $x\in E_{\theta}$ choose an element $Q_x \in\mathcal{Q}^{\prime}$ for which $x\in P_{\theta}(Q_x)$. Since $P_{\theta}(Q_x)$ is contained in the interval $[x- A\rho, x+A\rho]$, we see that that the family $\{ P_\theta(Q_x): x\in E_\theta\}$ is $\rho$-separated. This concludes the proof, as $\{Q_x\}_{x\in E_{\theta}}$ has the desired properties. \end{proof}

\section{Proof of Proposition \ref{keyprop}}
\label{sec:keyprop}

Throughout this section, $\Lambda$ is the attractor of a homogeneous uniform-fibre Gatzouras-Lalley construction:
\[
\Lambda = \bigcup_{(i,j)\in D} S_{ij}(\Lambda),
\]
where $D=\{1,\ldots,m\}\times\{1,\ldots, n\}$, and $S_{ij}(x,y) = (f_{ij}(x), g_i(y))$, with $f_{i j}(x)=a x+ c_{i j}$ and $g_{i}(x)=b x+d_i$ for some $0<a<b<1$, $c_{ij}, d_i\in\R$.

We fix some notation. For a positive integer $N$, let  $$\Sigma_{N}=\{\sigma=\sigma_1\sigma_2 \sigma_3\cdots \,|\, 0\leq \sigma_k < N\,\text{for all k}\in\mathbb{N}\},$$ i.e. the set of one sided infinite strings over the alphabet $\{0,1,\ldots, N-1\}$, and let $\Sigma_{N}^{*}$ be the collection of all finite strings. We will also let
$$\Sigma_{N}^k=\{\sigma=\sigma_1\sigma_2 \sigma_3\cdots\sigma_k \,|\, 0\leq \sigma_k < N\,\text{for all }1\leq j\leq k\}.$$
  For a string $\sigma=\sigma_1 \sigma_2\ldots\in\Sigma_{N}$ and a natural number $k\in\mathbb{N}$, let $\sigma|_{k}=\sigma_1 \sigma_2 \cdots \sigma_k$ denote the truncation of $\sigma$ to its first $k$ symbols.  For a string $\sigma=\sigma_1 \sigma_2 \cdots \sigma_k\in\Sigma_N^{*}$ of finite length, we let $|\sigma|=k$ denote the length of the string.  For $\sigma,\eta\in\Sigma_{N}^{*}$ we denote their concatenation by $\sigma\eta=\sigma_1 \sigma_2\cdots \sigma_{|\sigma|} \eta_1 \eta_2\cdots \eta_{|\eta|}$.

This notation gives rise to a natural coding of the set $\Lambda$: let
\[
X_{k}=\{(\eta,\eta^\prime)\in\Sigma_{m}^{k}\times\Sigma_{n}^{k}\,\,:\,\, (\eta_i,\eta_i^\prime)\in D\,\,\text{ for }i=1,2,\ldots,k\},
\]
and write
$$f_{\sigma,\sigma^\prime}=f_{\sigma_1,\sigma^\prime_1}f_{\sigma_2,\sigma^\prime_2}\cdots f_{\sigma_k,\sigma^\prime_k},\,\,\,\,\, g_{\xi}=g_{\xi_1}g_{\xi_2}\cdots g_{\xi_{|\xi| }},$$
for $(\sigma,\sigma^{\prime})\in X_{k}$, and $\xi\in\Sigma_{m}^{*}$. For $\sigma\in\Sigma_{m}^{*}$, $(\xi,\sigma^\prime)\in X_k$ we set
\begin{equation}\nonumber Q(\sigma,\xi,\sigma^\prime)=f_{\xi,\sigma^\prime} [0,1]\times g_{\sigma}[0,1].\end{equation}
Then we may write
\begin{equation}\nonumber \Lambda=\bigcap_{k\geq 1} \bigcup_{(\sigma,\sigma^\prime)\in X_k} Q(\sigma,\sigma,\sigma^\prime).\end{equation}

It is easy to see that, in the homogeneous uniform fibre case,
\begin{equation} \label{dimhufc}
\gamma:=\dim_H \Lambda = \frac{\log m}{-\log b}+\frac{\log n}{-\log a}.
\end{equation}
If $\gamma\geq 1$, then for any $\varepsilon>0$ we can find a subset $\Lambda_{\varepsilon}\subset\Lambda$, which is also the invariant set of an irrational, homogeneous uniform fibre construction, and such that $1-\varepsilon<\dim_H(\Lambda_\varepsilon)<1$. Indeed, all we need to do is iterate the original iterated function system a large number of times, remove an appropriate subset of maps from the resulting system, and let $\Lambda_\varepsilon$ be the attractor of the reduced system. (It follows from \eqref{dimhufc} that by this procedure we can obtain sets of dimension arbitrarily close to any prefixed value not exceeding $\gamma$). Thus, by replacing $\Lambda$ with $\Lambda_\varepsilon $ and then letting $\varepsilon\rightarrow 0$, we can assume without loss of generality that $\gamma< 1$, and we will do so for the rest of the proof.

For a positive integer $k$, set $\ell(k)=\max\{k^\prime\,\,:\,\, b^k\leq a^{k^\prime}\}$, and $Z_{k}=a^{\ell(k)}b^{-k}\in [1,a^{-1})$.   Given $\xi\in\Sigma_{m}^{\ell(k)}$, $\xi^\prime\in\Sigma_m^{\ell(k)+1}$ we set
\begin{eqnarray}
\nonumber \mathcal{Q}_k(\xi)&=&\{Q(\sigma,\xi,\sigma^{\prime})\,\,:\,\,(\sigma,\sigma^\prime)\in\Sigma_m^{k}\times\Sigma_n^{\ell(k)}\},\\
\nonumber \tilde{\mathcal{Q}}_k(\xi^\prime)&=&\{Q(\sigma,\xi^\prime,\sigma^{\prime})\,\,:\,\,(\sigma,\sigma^\prime)\in\Sigma_m^{k}\times\Sigma_n^{\ell(k)+1}\}.\end{eqnarray}

We note that $(\xi,\sigma')\in X_{\ell(k)}$ and therefore $\mathcal{Q}_k(\xi)$ is well defined, and likewise is $\tilde{\mathcal{Q}}_k(\xi^\prime)$. Due to our choice of $\ell(k)$, and in particular the fact that $Z_k$ is bounded, the elements of $\mathcal{Q}_k(\xi)$ and $\tilde{\mathcal{Q}}_k(\xi^{\prime})$ are approximately squares, which project onto intervals of size $\approx b^{k}$, i.e. there exists a constant $c>1$ such that $c^{-1}\leq \text{diam}(\text{proj}_{\theta}(Q))/b^{k}\leq c$ for all $Q\in\mathcal{Q}_k(\xi).$

For each $\xi\in \Sigma_m^{\ell(k)}$ we may apply Theorem \ref{dpthm} to the family $\mathcal{Q}_k(\xi)$ to obtain a ``large'' set of ``good'' angles $J_\xi$. We remark that in the context of \cite[Theorem 1]{PeresShmerkin2009}, the sets $\mathcal{Q}_k(\xi)$ are all translates of each other thanks to the product structure. In particular, all $J_\xi$ are equal and hence there is a ``large'' set of ``good'' angles which is uniform in $\xi$. However, in our situation we have little information about the intersection of the sets $J_\xi$, or even the cardinality of the set $\{\xi\in\Sigma_m^{\ell(k)} \,\,:\,\,\theta\in J_\xi\}$ for a fixed $\theta\in [0,\pi)$. The reduction to the uniform horizontal fibre case and the modification of the discrete Marstrand Theorem given in Theorem \ref{dpthm} are needed in order to tackle this additional difficulty. Another step in this direction is the following lemma, which shows that there exists a ``large'' set of angles $J\subset [0,\pi)$, such that whenever $\theta\in J$, we have that $\theta\in J_\xi$ for a ``large'' number of $\xi\in\Sigma_m^{\ell(k)}$.

\begin{lemma}There exists a constant $\delta>0$ such that for any $k\in\mathbb{N}$, and $\varepsilon>0$, there exists a set $J\subset [0,\pi)$ with the following properties:
\begin{enumerate}[(i)]
  \item $\mathcal{L}([0,\pi)\setminus J)\leq \varepsilon^{1/2}.$
  \item If $\theta\in J$ then there exists a subcollection $\mathcal{B}\subset\Sigma_m^{\ell(k)}$ with the following properties:
   \begin{enumerate}
   \item The cardinality of  $\mathcal{B}$ is at least $(1-\varepsilon^{1/2})m^{\ell(k)}$.
   \item For any $\xi\in \mathcal{B}$, and any family $\mathcal{Q}^{\prime}\subset\mathcal{Q}_{k}(\xi)$, there exists a subcollection $\mathcal{Q}_1$ of $\mathcal{Q}^{\prime}$ of cardinality at least $$\delta\varepsilon \left(\frac{|\mathcal{Q}^{\prime}|}{|\mathcal{Q}_k(\xi)|} \right)^{2}m^{k\gamma},$$ for which the elements of the family $\{\text{proj}_\theta (Q)\,:\,Q\in\mathcal{Q}_1\}$ are $b^k$-separated.
   \end{enumerate}
  \item $J$ is a finite union of open intervals.
\end{enumerate}\label{dpthm2}
\end{lemma}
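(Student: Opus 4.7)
The plan is to apply Theorem \ref{dpthm} to each family $\mathcal{Q}_k(\xi)$ separately at scale $\rho = b^k$, and then combine the resulting ``good'' angle sets $J_\xi$ via a Fubini and Markov argument to produce a single set $J$ on which the required conclusions hold for \emph{most} $\xi$ simultaneously.

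First I would verify the hypotheses of Theorem \ref{dpthm} for each $\mathcal{Q}_k(\xi)$ with constants uniform in $\xi$ and $k$. Each $Q\in\mathcal{Q}_k(\xi)$ is a rectangle of width $a^{\ell(k)}$ and height $b^k$; since $Z_k = a^{\ell(k)}/b^k \in [1,1/a)$, these are comparable to squares of side $b^k$, which fixes $A$ in terms of $a,b$ alone. The cardinality $|\mathcal{Q}_k(\xi)| = m^k n^{\ell(k)}$ is comparable to $b^{-k\gamma} = \rho^{-\gamma}$, uniformly in $\xi$, by \eqref{dimhufc} and the definition of $\ell(k)$; note also that since $mb\leq 1$, we have $m^{k\gamma}\leq C\,|\mathcal{Q}_k(\xi)|$ for a constant $C$, which lets the lower bound produced by Theorem \ref{dpthm} be rewritten in the form $\delta\varepsilon(|\mathcal{Q}'|/|\mathcal{Q}_k(\xi)|)^2 m^{k\gamma}$ after adjusting $\delta$. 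The growth condition --- that any disk of radius $\ell\in(\rho,1)$ meets $O((\ell/\rho)^\gamma)$ elements of $\mathcal{Q}_k(\xi)$ --- is a standard self-affine box-counting estimate, with constants uniform in $\xi$ thanks to the homogeneous uniform fibre structure. Theorem \ref{dpthm}, applied with parameter $\varepsilon$, thus yields for each $\xi$ a set $J_\xi\subset[0,\pi)$ that is a finite union of open intervals, satisfies $\mathcal{L}([0,\pi)\setminus J_\xi)\leq \varepsilon$, and on which the separation property of (ii)(b) holds with an absolute constant $\delta$.

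To combine the $J_\xi$'s I would count bad $\xi$'s. Set $N(\theta) = \#\{\xi\in\Sigma_m^{\ell(k)} : \theta\notin J_\xi\}$. By Fubini,
\[
\int_0^\pi N(\theta)\,d\theta \;=\; \sum_{\xi} \mathcal{L}([0,\pi)\setminus J_\xi) \;\leq\; \varepsilon\, m^{\ell(k)},
\]
so Markov's inequality gives $\mathcal{L}(\{\theta : N(\theta) > \varepsilon^{1/2} m^{\ell(k)}\})\leq \varepsilon^{1/2}$. I would then set
\[
J \;=\; \bigcup_{\substack{\mathcal{B}\subset \Sigma_m^{\ell(k)} \\ |\mathcal{B}|\geq (1-\varepsilon^{1/2})m^{\ell(k)}}} \; \bigcap_{\xi\in\mathcal{B}} J_\xi.
\]
The complement of $J$ is contained in $\{N > \varepsilon^{1/2} m^{\ell(k)}\}$, giving (i). For $\theta\in J$ the set $\mathcal{B}(\theta) = \{\xi:\theta\in J_\xi\}$ witnesses (ii)(a), and (ii)(b) then follows for each $\xi\in\mathcal{B}(\theta)$ by directly applying the conclusion of Theorem \ref{dpthm} at $\theta$ to the family $\mathcal{Q}_k(\xi)$. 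Property (iii) holds because $J$ is a finite union (over $\mathcal{B}$) of finite intersections of finite unions of open intervals.

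The main obstacle I anticipate is securing uniformity of the constants $A_1, A_2$ in the hypotheses of Theorem \ref{dpthm} across all $\xi\in\Sigma_m^{\ell(k)}$. The rectangles in a single $\mathcal{Q}_k(\xi)$ live within a vertical strip of width $\approx b^k$ rather than being distributed across $[0,1]^2$, so the growth condition at intermediate scales $\ell\in(b^k,1)$ has to be checked carefully using the hierarchy of cylinders. The homogeneous uniform fibre hypothesis is what makes this uniformity possible, since it makes every $\mathcal{Q}_k(\xi)$ an affine copy of a common template.
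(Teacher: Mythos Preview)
Your proposal is correct and follows essentially the same route as the paper: verify the hypotheses of Theorem~\ref{dpthm} for each $\mathcal{Q}_k(\xi)$ with constants independent of $\xi$, apply the theorem to obtain $J_\xi$, and then run exactly the Fubini/Markov averaging you describe to produce $J$; the paper even writes $J$ in the same form $\bigcup_{|B|>(1-\varepsilon^{1/2})m^{\ell(k)}}\bigcap_{\xi\in B}J_\xi$ to read off property~(iii). One small geometric correction to your closing remark: for fixed $\xi$ the rectangles in $\mathcal{Q}_k(\xi)$ are \emph{not} confined to a thin vertical strip---the parameters $(\sigma,\sigma')$ range over all of $\Sigma_m^k\times\Sigma_n^{\ell(k)}$, so the family is spread throughout $[0,1]^2$---which is precisely why the growth hypothesis of Theorem~\ref{dpthm} is straightforward to check via the hierarchy of approximate squares, as the paper does.
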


\begin{proof}
We first show that for a fixed $\xi\in\Sigma_m^{\ell(k)}$ the family $\mathcal{Q}_k(\xi)$ satisfies the hypothesis of Theorem \ref{dpthm}.   Each $Q\in\mathcal{Q}_{k}(\xi)$ has size $Z_{k}b^k\times b^k$, thus taking $A=\sqrt{1+a^{-2}}$ implies that $Q$ is contained inside some ball of radius $Ab^k$ with centre in $Q$.  Similarly, $Q$ contains a ball of radius $A^{-1}b^k$.  In addition,
\begin{eqnarray}
\nonumber |\mathcal{Q}_{k}(\xi)| & = & m^k n^{\ell(k)} \\
\nonumber  & > & n^{-1} \left( n^{\log_a b} m \right)^{k} \\
\label{lowercard}  & = &  n^{-1} b^{-k\gamma}=A_1^{-1}b^{-k\gamma}.
\end{eqnarray}
Finally fix $j<k$, and let $Q=Q(\sigma,\xi|_{\ell(j)},\sigma^{\prime})\in\mathcal{Q}_{j}(\xi|_{\ell(j)})$ then
\begin{eqnarray}
\nonumber |\{Q(\eta,\xi,\eta^{\prime})\in \mathcal{Q}_{k}(\xi)\,:\,Q(\eta,\xi,\eta^{\prime})\subset Q\}| & = & |\{ (\eta,\eta^{\prime})\in \Sigma_m^k\times \Sigma_n^{\ell(k)} \,:\, \eta|_{\ell(j)}=\sigma,\,\,\eta^{\prime}|_{j}=\sigma^{\prime} \}| \\
\nonumber  & = & m^{k-j} n^{\ell(k)-\ell(j)} \\
 & < &  n b^{-(k-j)\gamma}. \label{balls}
\end{eqnarray}  Any ball $B$ of radius $b^{j}$ may intersect at most 9 elements of $\mathcal{Q}_{j}(\xi|_{\ell(j)})$. Combining this with (\ref{balls}) and setting $A_2=9n$, we see that $B$ may intersect at most $A_2 b^{-(k-j)\gamma}$ elements of $\mathcal{Q}_k(\xi)$.  It is worth noting that the constants $A,A_1,A_2,\gamma$ have no dependence on the choice of $\xi$.

The above argument combined with Theorem \ref{dpthm} implies the following: there is $\delta>0$ such that for any $\varepsilon>0$, $k\in\mathbb{N}$ and $\xi\in\Sigma_m^{\ell(k)}$, there exists a set of angles $J_{\xi}\subset [0,\pi)$ satisfying:
\begin{enumerate}[(i)]
  \item $\mathcal{L}([0,\pi)\setminus J_{\xi})\leq \varepsilon.$
  \item If $\theta\in J_\xi$, then for any family $\mathcal{Q}^{\prime}\subset\mathcal{Q}_k(\xi)$, there exists a subcollection $\mathcal{Q}_1$ of $\mathcal{Q}^{\prime}$ of cardinality at least
  $$\delta\varepsilon\left(\frac{|\mathcal{Q}^{\prime}|}{|\mathcal{Q}_k(\xi)|}\right)^{2}|\mathcal{Q}_k(\xi)|,$$
  such that the elements of the family $\{\text{proj}_{\theta}(Q)\,:\,Q\in\mathcal{Q}_1\}$ are $b^k$-separated.
  \item $J_\xi$ is a finite union of open intervals.
\end{enumerate}

Furthermore combining (\ref{lowercard}) with property (ii) above, we may take the cardinality of $\mathcal{Q}_1$ to be at least $A_1^{-1}\delta\varepsilon \left(|\mathcal{Q}^{\prime}|/|\mathcal{Q}_k(\xi)|\right)^{2}b^{-k\gamma}.$

For $\theta\in [0,\pi)$, let  $\Xi_{\theta}=\{\xi\in\Sigma_m^{\ell(k)}\,:\, \theta\in J_{\xi}\}$, then
\begin{equation}\label{reverse} \int_{0}^{\pi} |\Sigma_m^{\ell(k)} \setminus \Xi_{\theta}| d\theta =\sum_{\xi\in \Sigma_m^{\ell(k)}} \mathcal{L}([0,\pi)\setminus J_{\xi}) < m^{\ell(k)}  \varepsilon.\end{equation}
We set
\begin{equation}
\nonumber J=\{\theta\in [0,\pi) \,:\, |\Xi_\theta|>(1-\varepsilon^{1/2}) m^{\ell(k)} \}.
\end{equation}
Then from (\ref{reverse}) we deduce $\mathcal{L}([0,\pi)\setminus J)\leq \varepsilon^{1/2}$.  Finally, we may express $J$ in the form
\begin{equation}\nonumber
J= \bigcup_{|B|>(1-\varepsilon^{1/2})m^{\ell(k)}} \bigcap_{\xi\in B} J_{\xi}\end{equation}
which is easily seen to be a finite union of open intervals.  This completes the proof.
\end{proof}

Given $\tau\in\R$, let $\Pi_\tau(x,y)= a^{-\tau}x+y$ and $\widetilde{\Pi}_\tau(x,y) = -a^{-\tau}x+y$. Note that for a fixed $\theta\in (0,\pi/2)$, the set $\text{proj}_{\theta}(\Lambda)$ is affinely equivalent to the set $\Pi_{\tau}(\Lambda)$, where $\tau=\log(\tan (\theta))/\log a$.  Similarly, for $\theta\in (\pi/2,\pi)$ the set $\text{proj}_{\theta}(\Lambda)$ is affinely equivalent to $\widetilde{\Pi}_\tau(\Lambda)$ for $\tau=\log(-\tan(\theta))/\log(a).$  For technical reasons, it is convenient to work with the maps $\Pi_\tau$ in the place of the orthogonal projections $\text{proj}_\theta$.  Thus, as affine bijections preserve Hausdorff dimension, it suffices to show that
\[
\dim_H (\Pi_\tau \Lambda),\,\dim_H(\tilde{\Pi}_\tau \Lambda )\geq \gamma\quad\textrm{for any }\tau\in\mathbb{R}.
\]
The proofs for $\Pi_\tau$ and $\widetilde{\Pi}_\tau$ are analogous, and accordingly we will present the proof only for $\Pi_\tau$, corresponding to $\theta\in (0,\pi/2)$.

Since the map $\theta\mapsto\log(\tan (\theta))/\log a$ is a $C^1$ injection on any compact subset of $(0,\pi/2)$, a reparametrization of Lemma \ref{dpthm2} yields the following:
\begin{cor}Given $\tau\in\mathbb{R}$, there exist constants $\delta^{\prime},L>0$ such that for any $k\in\mathbb{N}$ and $\varepsilon>0$ there is a set $\tilde{F}\subset [\tau,\tau+1)$ with the following properties:
\begin{enumerate}[(i)]
  \item $\mathcal{L}([\tau,\tau+1)\setminus \tilde{F})\leq L\varepsilon^{1/2}.$
  \item If $t\in \tilde{F}$, then there exists a subcollection $\mathcal{B}_t\subset\Sigma_m^{\ell(k)}$ of cardinality at least $(1-\varepsilon^{1/2}) m^{\ell(k)}$, such that for any $\xi\in \mathcal{B}_t$ and family $\mathcal{Q}^{\prime}\subset\mathcal{Q}_{k}(\xi)$, there exists a subcollection $\mathcal{Q}_1$ of $\mathcal{Q}^{\prime}$ of cardinality at least $$\delta^{\prime}\varepsilon \left(\frac{|\mathcal{Q}^{\prime}|}{|\mathcal{Q}_k(\xi)|} \right)^{2}b^{-k\gamma},$$ for which the elements of the family $\{\Pi_t (Q)\,:\,Q\in\mathcal{Q}_1\}$ are $b^k$-separated.
  \item $\tilde{F}$ is a finite union of open intervals.
\end{enumerate}\label{dpthm3}
\end{cor}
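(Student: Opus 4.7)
The plan is to derive this corollary from Lemma \ref{dpthm2} by the change of variables $\phi:(0,\pi/2)\to\mathbb{R}$ defined by $\phi(\theta)=\log\tan\theta/\log a$. Since $\phi'(\theta)=1/(\sin\theta\cos\theta\log a)$ (recall $\log a<0$), the map $\phi$ is a strictly monotone $C^1$ diffeomorphism onto $\mathbb{R}$. With $\tau$ fixed, the preimage $[\theta_1,\theta_2]:=\phi^{-1}([\tau,\tau+1])$ is a compact subinterval of $(0,\pi/2)$, so both $\phi'$ and $(\phi^{-1})'$ are bounded there; let $L$ denote an upper bound for $|(\phi^{-1})'|$ on $[\tau,\tau+1]$.

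Given $k$ and $\varepsilon$, I would first apply Lemma \ref{dpthm2} to obtain a set $J\subset[0,\pi)$ with the stated properties, and then set
\[
\tilde F := \phi\bigl(J\cap(\theta_1,\theta_2)\bigr)\subset[\tau,\tau+1).
\]
Property (i) is then immediate, since $[\tau,\tau+1)\setminus\tilde F = \phi((\theta_1,\theta_2)\setminus J)$ has Lebesgue measure at most $L\cdot\mathcal{L}([0,\pi)\setminus J)\leq L\varepsilon^{1/2}$. Property (iii) is clear, since $\phi$ is a homeomorphism carrying finite unions of open intervals to finite unions of open intervals.

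For property (ii), given $t\in\tilde F$, set $\theta=\phi^{-1}(t)\in J\cap(\theta_1,\theta_2)$ and take $\mathcal{B}_t$ to be the subcollection $\mathcal{B}\subset\Sigma_m^{\ell(k)}$ produced by Lemma \ref{dpthm2} at angle $\theta$. Given $\xi\in\mathcal{B}_t$ and $\mathcal{Q}'\subset\mathcal{Q}_k(\xi)$, the lemma supplies a subcollection $\mathcal{Q}_1\subset\mathcal{Q}'$ of the required cardinality whose projections $\{\text{proj}_\theta(Q):Q\in\mathcal{Q}_1\}$ are $b^k$-separated. A direct computation shows
\[
\Pi_t(x,y) = a^{-t}x + y = \cot\theta\cdot x + y = \frac{\text{proj}_\theta(x,y)}{\sin\theta},
\]
so the family $\{\Pi_t(Q):Q\in\mathcal{Q}_1\}$ is $(b^k/\sin\theta)$-separated, and hence $b^k$-separated since $\sin\theta<1$ on $(0,\pi/2)$. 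The cardinality bound from the proof of Lemma \ref{dpthm2} is already of the form $A_1^{-1}\delta\varepsilon(|\mathcal{Q}'|/|\mathcal{Q}_k(\xi)|)^2 b^{-k\gamma}$ (using the comparability $b^{-k\gamma}\approx m^kn^{\ell(k)}=|\mathcal{Q}_k(\xi)|$ established there), so taking $\delta':=A_1^{-1}\delta$ matches the statement of the corollary.

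I do not anticipate any genuine obstacle: the argument is pure reparametrization plus the observation that the scaling factor $1/\sin\theta>1$ only improves separation. The only mildly fussy bookkeeping is matching constants and keeping track that the bounded Jacobian of $\phi^{-1}$ on $[\tau,\tau+1]$ (depending on $\tau$, but this is allowed in the statement) absorbs into $L$ and $\delta'$.
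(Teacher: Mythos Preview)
Your proposal is correct and follows exactly the one-line argument the paper gives (``a reparametrization of Lemma \ref{dpthm2}''), just with the details spelled out. One small slip: to bound $\mathcal{L}(\phi(A))$ by $L\cdot\mathcal{L}(A)$ you need $L=\sup|\phi'|$ on $[\theta_1,\theta_2]$, not $\sup|(\phi^{-1})'|$; both are finite on the compact interval, so this is only a typo and does not affect the argument.
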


Property (ii) of Corollary \ref{dpthm3} holds for the families $\tilde{\mathcal{Q}}_k(\xi^{\prime})$ as each element is contained in an element of $\mathcal{Q}_k(\xi)$ with $\xi^{\prime}|_{\ell(k)}=\xi$.  For $t\in [\tau,\tau+1)$, let
\[
\tilde{\mathcal{B}}_t=\{\xi\xi^{\prime}\,:\,\xi\in \mathcal{B}_t,\,\xi^{\prime}\in\Sigma_m\}\subset \Sigma_m^{\ell(k)+1}
\]
be the collection of strings obtained by adjoining an element of $\Sigma_m$ to those of $\mathcal{B}_{t}$.

We now construct a branching process driven by an irrational rotation, which we will then use to construct a subset $\Lambda_\tau$ of $\Pi_\tau \Lambda$, satisfying $\dim_H \left(\Lambda_\tau \right)\geq \gamma.$  Define a map $T:[0,1)\rightarrow [0,1)$ by $T(x)=x+\alpha \mod 1$, where $\alpha=\log (Z_k)/-\log a=k \log_a b - \ell(k)\in [0,1)$.   We may choose a sequence $\{e_j\}_j$ to satisfy
\begin{equation}\label{transformation}T^j(0)=\frac{\log (a^{e_j} b^{-jk})}{-\log a},\end{equation}
i.e. set $e_1=\ell(k)$, and for $j>1$ let
\begin{equation}
\nonumber\label{sequence}
e_{j} =\begin{cases}
      & e_{j-1}+\ell(k) \qquad \text{ if }\,\,a^{e_{j-1}+\ell(k)+1}<b^{jk}  \\
      & e_{j-1}+\ell(k)+1\,\,\text{ otherwise}.
\end{cases}
\end{equation}
For $j\in\mathbb{N}$ let
\begin{equation}\nonumber
\Gamma_k(j)  =  \begin{cases}
      & \mathcal{B}_{t}\,\, \text{ if }\,\,t=\tau+T^{j}(0)\in \tilde{F}\,\,\text{ and }\,\, e_{j+1}-e_j=\ell(k), \\
      & \tilde{\mathcal{B}}_{t}\,\, \text{ if }\,\,t=\tau+T^{j}(0)\in \tilde{F}\,\,\text{ and }\,\, e_{j+1}-e_j=\ell(k)+1, \\
      &  \Sigma_m^{e_{j+1}-e_j}\,\,\text{ otherwise}
\end{cases}\end{equation}
where $t=\tau+T^j(0)$.
In which case, by Corollary \ref{dpthm3},
\begin{equation}
\label{countingstuff}
|\Gamma_k(j)|>(1-\varepsilon^{1/2})m^{e_{j+1}-e_j}.\end{equation}

We now introduce a function $s:\mathbb{N}\rightarrow\mathbb{N}$ which relates the two quantities $e_j$ and $jk$.  For a positive integer $j$, let $s(j)$ be the unique integer such that $e_{s(j)-1}<jk \leq e_{s(j)}$, then from (\ref{transformation}) we see
\begin{equation}\label{sj} j\frac{\log a}{\log b} \leq s(j) < j\frac{\log a}{\log b}+\frac{\log a}{k\log b}+1.  \end{equation}
For $j\in\mathbb{N}$ we let
\begin{eqnarray}
\nonumber \Delta_k(j) & = & \{ \eta\in\Sigma_m^{jk-e_{s(j)-1}}\,\,:\,\, \eta\eta^\prime\in\Gamma_k(s(j)-1)\,\,\text{ for some }\,\eta^\prime\in\Sigma_m^{e_{s(j)}-jk}\}, \\
\nonumber \Delta_k^\prime (j) & = &  \{\eta^\prime\in\Sigma_m^{e_{s(j)}-jk}\,\,:\,\, \eta\eta^\prime\in\Gamma_k(s(j)-1)\,\,\text{ for some }\,\eta\in\Sigma_m^{jk-e_{s(j)-1}}\}.\end{eqnarray}
In the event that $jk=e_{s(j)}$, we take $\Delta_{k}^\prime(j)$ to contain the empty word, otherwise, both $\Delta_k(j)$ and $\Delta_k^\prime(j)$ are non-empty and, moreover, from (\ref{countingstuff}) we deduce that
\begin{eqnarray}
\nonumber |\Delta_k(j)| &>& (1-\varepsilon^{1/2})m^{jk-e_{s(j)-1}},\\
\nonumber |\Delta^{\prime}_k(j)| &>& (1-\varepsilon^{1/2})m^{e_{s(j)}-jk}.\end{eqnarray}
For $\eta\in\Sigma_m^{jk-e_{s(j)-1}}$, set
\begin{equation}\nonumber \phi_j(\eta)=|\{\eta^\prime\in\Delta^\prime_k(j)\,\,:\,\,\eta\eta^\prime\in\Gamma_k(s(j)-1)\}|.\end{equation}
Then by letting
\begin{equation}\nonumber \Theta(j):=\left\{\eta\in\Delta_k(j)\,\,:\,\,\phi_j(\eta)> \frac{1}{2}m^{e_s(j)-jk}\right\},\end{equation}
we deduce that
\begin{eqnarray}
\nonumber |\Sigma_m^{e_{s(j)}-e_{s(j)-1} }\setminus \Gamma_k(s(j)-1)| & = & \sum_{\eta\in \Sigma_m^{jk-e_{s(j)-1}} }\left( m^{e_{s(j)}-jk}-\phi_j(\eta)\right)\\
\nonumber &>&  \sum_{\eta\in \Sigma_m^{jk-e_{s(j)-1}} \setminus\Theta(j) }\left(  m^{e_{s(j)}-jk}-\phi_j(\eta)\right) \\
\nonumber &>& \frac{1}{2} m^{e_{s(j)}-jk} |\Sigma_m^{jk-e_{s(j)-1}} \setminus\Theta(j)|.
\end{eqnarray}
Combining this with (\ref{countingstuff}) we see that $|\Theta(j)|>(1-2\varepsilon^{1/2})m^{jk-e_{s(j)-1}}.$  For $\eta\in\Theta(j)$, let
\begin{equation}\nonumber \Theta^\prime(j,\eta)=\{\eta^\prime\,\,:\,\, \eta\eta^\prime\in\Gamma_k(s(j)-1)\},\end{equation}
and
\begin{equation}\nonumber
G_{k}(j,\eta)= \{\eta^{\prime}\xi_1\xi_2\cdots\xi_{s(j+1)-s(j)-1}\eta^{\prime\prime}\,\,:\,\, \eta^{\prime}\in\Theta^\prime(j,\eta),\,\,\eta^{\prime\prime}\in\Theta(j+1),\,\,\xi_i\in\Gamma_k(i+s(j)-1)\},\end{equation}
Then from (\ref{countingstuff}), (\ref{sj}) and the definitions of $\Delta_k(j)$ and $\Delta_k^{\prime}(j)$, we see that if $\varepsilon<1/9$, then
\begin{eqnarray}
\nonumber |G_k(j,\eta)| &=&  |\Theta(j+1)||\Theta^{\prime}(j,\eta)|\prod_{i=s(j)}^{s(j+1)-2} |\Gamma_k(i)| \\
\nonumber &>& \frac{1}{2}(1-2\varepsilon^{1/2})(1-\varepsilon^{1/2})^{s(j+1)-s(j)-1}m^k \\
\nonumber &>& \frac{1}{6}(1-\varepsilon^{1/2})^{s(j+1)-s(j)-1}m^k \\
 &>& \frac{1}{6}\left(\frac{2}{3}\right)^{2\frac{\log a}{\log b}} m^k. \label{G}  \end{eqnarray}
For $\xi\in\Sigma_m^{e_{j+1}-e_j}$ and $\eta\in\Theta(j)$ let
\begin{eqnarray}\nonumber
M_k(j,\eta)&=& G_k(j,\eta)\times\Sigma_n^{e_{j+1}-{e_j}},\\
\nonumber\mathcal{U}_k(j,\xi,\eta)&=&\{Q(\sigma,\xi,\sigma^{\prime})\,\,:\,\, (\sigma,\sigma^{\prime})\in M_k(j,\eta) \}.\end{eqnarray}
Then for any $\xi\in\Sigma_m^{e_{j+1}-e_j}$ and $\eta\in\Theta(j)$ such that $e_{j+1}-e_j=\ell(k)$, (\ref{G}) yields
\begin{eqnarray}
\nonumber |\mathcal{U}_k(j,\xi,\eta)| & = &n^{\ell(k)} |G_k(j,\eta)|  \\
\nonumber  & > &    \frac{1}{6}\left(\frac{2}{3}\right)^{2\frac{\log a}{\log b}} n^{\ell(k)} m^k \\
\nonumber & > &  c |\mathcal{Q}_k(\xi)|,\end{eqnarray}
for some constant $c$, which is independent of $k$ and $\varepsilon$.  Similarly, if $e_{j+1}-e_j=\ell(k)+1$ we have
\begin{equation}
\nonumber  |\mathcal{U}_k(j,\xi,\eta)|>c  |\tilde{\mathcal{Q}}_k(\xi)|.\end{equation}
Thus if $t=T^{j}(0)+\tau\in\tilde{F}$, $\xi\in \Gamma_k(j)$ and $\eta\in\Theta(j)$, by Corollary \ref{dpthm3} we have that there exists a subcollection $\mathcal{U}^{\prime}_k(j,\xi,\eta)$ of $\mathcal{U}_k(j,\xi,\eta)$, of cardinality at least $c^{2} \delta^{\prime}\varepsilon b^{-k\gamma}$, and for which the elements of the family $\{\Pi_t (Q)\,:\,Q\in\mathcal{U}^{\prime}_k(j,\xi,\eta)\}$ are $b^{k}$-separated.  We let $M^{\prime}_k(j,\xi,\eta)$ denote the subset of $M_k(j,\eta)$ for which
\begin{equation}\nonumber
\mathcal{U}_k^{\prime}(j,\xi,\eta)=\{Q(\sigma,\xi,\sigma^{\prime})\,\,:\,\,(\sigma,\sigma^{\prime})\in M^{\prime}_k(j,\xi,\eta)\}.\end{equation}

\begin{lemma}There exists a rooted tree, $\mathcal{R}$, with vertices labeled by elements of $\Sigma_m^{jk}\times\Sigma_n^{e_j}$ for some $j\in\mathbb{N}$, with the following properties: Let us denote the elements of $j$'th level by $\mathcal{R}_j$. Then:
\begin{description}
  \item[A] If $(\sigma,\sigma^{\prime})$ is the parent of $(\eta,\eta^{\prime})$ then $(\eta|_{|\sigma|},\eta^{\prime}|_{|\sigma^{\prime}|})=(\sigma,\sigma^{\prime}).$
  \item[B] If $(\sigma,\sigma^{\prime})\in\mathcal{R}_{j}$ then $Q(\sigma,\sigma|_{|\sigma'| },\sigma^{\prime})$ has size $a^{-T^j(0)}b^{jk}\times b^{jk}.$
  \item[C] If $(\sigma,\sigma^{\prime})\in\mathcal{R}_j$ then
  \begin{eqnarray} \nonumber \xi_i(\sigma)&:=&\sigma_{e_i+1}\sigma_{e_i+2}\cdots\sigma_{e_{i+1}}\in\Gamma_k(i)\,\,\,\text{ for }\,\, j\leq i \leq s(j)-2,\\
  \nonumber \eta(\sigma^{\prime})&:=&\sigma_{e_{s(j)-1}+1}\sigma_{e_{s(j)-1}+2}\cdots\sigma_{jk}\in\Theta(j).\end{eqnarray}
  \item[D] The elements of the set $\{\Pi_{\tau}Q(\sigma,\sigma|_{|\sigma^\prime|},\sigma^\prime)\,:\,(\sigma,\sigma^{\prime})\in\mathcal{R}_{j}\}$ are disjoint and $b^{jk}$-separated.
  \item[E] Each element of $\mathcal{R}_{j}$ has the same number of offspring $C_j$, moreover
  \begin{eqnarray}
  \nonumber  T^{j}(0)+\tau\in \tilde{F} \,\,\,&\implies& C_j > c^2 \delta^{\prime}\varepsilon b^{-k\gamma},\\
  \nonumber  T^{j}(0)+\tau\not\in \tilde{F} \,\,\,&\implies& C_j=1.\end{eqnarray}
 \end{description}\label{tree}
\end{lemma}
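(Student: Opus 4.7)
The plan is to construct $\mathcal{R}$ by induction on the level $j$, starting from $\mathcal{R}_0 = \{(\emptyset,\emptyset)\}$, where properties A, C, E are vacuous and B, D hold trivially. The geometric core of the inductive step is an affine rescaling identity. If $\Psi_{\sigma,\sigma'}(x,y) = (a^{e_j}x + c_{\sigma,\sigma'},\, b^{jk}y + d_\sigma)$ is the affine map taking $[0,1]^2$ onto the parent rectangle $Q(\sigma, \sigma|_{e_j}, \sigma')$ of a node $(\sigma,\sigma') \in \mathcal{R}_j$, then the identity $a^{e_j - \tau}b^{-jk} = a^{-(\tau + T^j(0))}$, which is just the definition of $T^j(0)$, yields
\[
\Pi_\tau \circ \Psi_{\sigma,\sigma'}(x,y) \;=\; b^{jk}\, \Pi_{\tau + T^j(0)}(x,y) + \text{const}.
\]
Consequently, any family of sub-rectangles of $[0,1]^2$ whose $\Pi_t$-projections are $b^k$-separated (where $t := \tau + T^j(0)$) lifts, under $\Psi_{\sigma,\sigma'}$, to a family of sub-rectangles of $Q(\sigma, \sigma|_{e_j}, \sigma')$ whose $\Pi_\tau$-projections are $b^{(j+1)k}$-separated.

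For the inductive step, suppose $\mathcal{R}_j$ satisfies A--E and set $t = \tau + T^j(0) \in [\tau, \tau + 1)$. When $t \in \tilde{F}$, fix once and for all some $\xi^* \in \Gamma_k(j)$; this is non-empty since $\Gamma_k(j)$ is either $\mathcal{B}_t$ or $\tilde{\mathcal{B}}_t$, each of cardinality at least $(1-\varepsilon^{1/2})m^{e_{j+1}-e_j}$. For every node $(\sigma, \sigma') \in \mathcal{R}_j$, property C of the induction hypothesis gives $\eta(\sigma') \in \Theta(j)$, so the subcollection $\mathcal{U}'_k(j, \xi^*, \eta(\sigma'))$ built just above the lemma has cardinality at least $c^2\delta'\varepsilon b^{-k\gamma}$ and its $\Pi_t$-projections are $b^k$-separated. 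Declare the pairs $(\sigma\alpha, \sigma'\beta)$ with $(\alpha,\beta) \in M'_k(j, \xi^*, \eta(\sigma'))$ the offspring of $(\sigma, \sigma')$, then trim to a uniform count $C_j$ shared by every node. When $t \notin \tilde{F}$, give each $(\sigma, \sigma') \in \mathcal{R}_j$ a single offspring $(\sigma\alpha_0, \sigma'\beta_0)$ with any $(\alpha_0, \beta_0) \in G_k(j, \eta(\sigma')) \times \Sigma_n^{e_{j+1}-e_j}$ (non-empty by the cardinality bounds on $\Theta(j+1)$ and $G_k$ displayed in the excerpt), and set $C_j = 1$.

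Verification is straightforward: A, B, E follow immediately from the construction, and C is encoded in the definition of $G_k(j, \eta(\sigma'))$: its elements decompose as $\eta'\xi_1\cdots\xi_{s(j+1)-s(j)-1}\eta''$ with pieces in the prescribed $\Gamma_k$ and $\Theta$ sets, which is precisely what property C at level $j+1$ demands of the newly appended substrings of $\sigma\alpha$. For D, sibling separation is the direct output of the affine rescaling identity applied to $\mathcal{U}'_k(j, \xi^*, \eta(\sigma'))$ (trivially when $C_j = 1$), while descendants of distinct nodes of $\mathcal{R}_j$ have $\Pi_\tau$-projections contained in the $b^{jk}$-separated parent projections from the induction hypothesis, so they are $b^{jk}$-separated and a fortiori $b^{(j+1)k}$-separated.

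The main obstacle is twofold: isolating the affine rescaling identity that reduces $\Pi_\tau$ at the parent scale to $\Pi_t$ at the unit-square scale (so that Corollary \ref{dpthm3} applies with the rotated parameter $t = \tau + T^j(0)$), and checking that the combinatorial gadgets $\Gamma_k, \Theta, G_k$ have been engineered so the newly appended $\alpha$ can simultaneously be drawn from $M'_k(j,\xi^*,\eta(\sigma'))$ (to guarantee D) and from $G_k(j,\eta(\sigma'))$ (to guarantee C). Both ingredients are supplied by the construction preceding the lemma, so once they are in hand the induction closes cleanly.
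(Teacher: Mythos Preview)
Your proposal contains a genuine gap in the verification of property~\textbf{D}. You fix a single $\xi^*\in\Gamma_k(j)$ and use $M'_k(j,\xi^*,\eta(\sigma'))$ for every parent $(\sigma,\sigma')\in\mathcal{R}_j$. But the separation supplied by Corollary~\ref{dpthm3} is for the family $\{\Pi_t\, Q(\alpha,\xi^*,\beta):(\alpha,\beta)\in M'_k(j,\xi^*,\eta)\}$, i.e.\ for the model rectangles with middle index $\xi^*$. The child rectangle that actually appears in the tree is $Q(\sigma\alpha,(\sigma\alpha)|_{e_{j+1}},\sigma'\beta)$, and since $e_{j+1}\le jk=|\sigma|$ one has $(\sigma\alpha)|_{e_{j+1}}=(\sigma|_{e_j})\,\xi_j(\sigma)$ with $\xi_j(\sigma)=\sigma_{e_j+1}\cdots\sigma_{e_{j+1}}$ forced by the label $\sigma$. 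Hence the child sits inside the parent at relative position $Q(\alpha,\xi_j(\sigma),\beta)$, and your rescaling identity yields separation for $\{\Pi_\tau\, Q(\sigma\alpha,(\sigma|_{e_j})\xi^*,\sigma'\beta)\}$, which are \emph{not} the rectangles required by~\textbf{D} unless $\xi^*=\xi_j(\sigma)$. Different parents have different $\xi_j(\sigma)$ in general, so the argument breaks.

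This is exactly why the paper takes $\xi=\xi_j(\sigma)$ for each parent individually, and why property~\textbf{C} is not incidental bookkeeping but essential to the induction: it certifies $\xi_j(\sigma)\in\Gamma_k(j)$, so that Corollary~\ref{dpthm3} may be invoked with the parent-dependent $\xi$. Two smaller points: your check of~\textbf{C} at level $j+1$ covers only the blocks coming from $G_k$, omitting the range $j+1\le i\le s(j)-2$, which is inherited from $\sigma$ via the induction hypothesis; and the paper starts the induction at a level $j_0$ with $j_0k-e_{j_0}>\ell(k)$ precisely so that $\xi_j(\sigma)$ and $\eta(\sigma)$ can be read off from $\sigma$, whereas starting from $(\emptyset,\emptyset)$ leaves these strings undefined at the first step.
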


\begin{proof}
At each stage of the construction we define the offspring $C(\sigma,\sigma^{\prime})$ of the element $(\sigma,\sigma^{\prime})\in\mathcal{R}_{j}$ to be the concatenation of $(\sigma,\sigma^{\prime})$ with elements of $\mathcal{U}_{k}(\xi,j)$, where $\xi=\xi_j(\sigma)$.   Accordingly, we require $(\sigma,\sigma^{\prime})$ of $\mathcal{R}_{j}$ to be such that $|\sigma|-|\sigma^{\prime}|>\ell(k)$. Therefore we will start the induction at $j_0\in\mathbb{N}$, where $\ell(k)<j_0 k-e_{j_0}.$

To construct $\mathcal{R}_{j_0}$, let $(\sigma,\sigma^{\prime})$ be any element of $ \Sigma_m^{e_{j_0}}\times\Sigma_n^{e_{j_0}}$, and for $i=j_0,j_0+1,\ldots,s(j_0)-2$ let $\xi_i\in\Gamma_k(i)$. Then set

$$\mathcal{R}_{j_0}=\{(\sigma\xi_{j_0}\xi_{j_0+1}\cdots\xi_{s(j_0)-2}\eta,\sigma^{\prime})\},$$ where $\eta$ is any element of $\Theta(j_0)$.  Properties \textbf{A}, \textbf{D} and \textbf{E} follow trivially.  To see \textbf{B}, the rectangle $Q(\sigma\xi_{j_0}\xi_{j_0+1}\cdots\xi_{s(j_0)-2}\eta,\sigma,\sigma^{\prime})$ has size $a^{e_{j_0}}\times b^{j_0k}=a^{-T^{j_0}(0)} b^{j_0 k}\times b^{j_0k}$.  By construction $\xi_i(\sigma\xi_{j_0}\xi_{j_0+1}\cdots\xi_{s(j_0)-2}\eta)=\xi_{i}\in\Gamma_k(i)$,  for $i=j_0,j_0+1,\ldots,s(j_0)-2$, and $\eta(\sigma\xi_{j_0}\xi_{j_0+1}\cdots\xi_{s(j_0)-2}\eta)=\eta\in\Theta(j)$, showing \textbf{C}.

Now suppose that for $j>j_0$, $\mathcal{R}_j$ has been defined and properties \textbf{A}-\textbf{E} hold.  We now construct $\mathcal{R}_{j+1}$.  Fix an element $(\sigma,\sigma^\prime)\in\mathcal{R}_j$, and let $\xi=\xi_j(\sigma)$, $\eta=\eta(\sigma)$.  In the case that $T^{j}(0)+\tau\in\tilde{F}$ let the offspring $C(\sigma,\sigma^{\prime})$ of $(\sigma,\sigma^{\prime})$ be
\begin{equation}\nonumber
C(\sigma,\sigma^{\prime})=\{ (\sigma\nu,\sigma^{\prime}\nu^{\prime})\,\,:\,\, (\nu,\nu^{\prime})\in M_k^{\prime}(j,\xi,\eta)\},\end{equation}
otherwise $T^j(0)+\tau\not\in\tilde{F}$, in which case we let $C(\sigma,\sigma)=\{ (\sigma\nu,\sigma^{\prime}\nu^{\prime})\}$ where $(\nu,\nu^{\prime})$ is any element of $M_{k}(j,\eta)$.  Property \textbf{A} is clear.  Any element of $\mathcal{R}_{j+1}$ has size
$$a^{e_{j+1}}\times b^{(j+1)k}=a^{-T^{j+1}(0)} b^{(j+1)k}\times b^{(j+1)k},$$ which shows \textbf{B}.  If $(\sigma\nu,\sigma^{\prime}\nu^{\prime})\in C(\sigma,\sigma^{\prime})$ then as $\mathcal{R}_j$ satisfies property \textbf{C}, we have $\xi_i(\sigma\nu)=\xi_i(\sigma)\in\Gamma_j(i)$ for $i=j+1,j+2,\ldots,s(j)-2$ and $\eta=\eta(\sigma)\in\Theta(j)$.  Further, $(\nu,\nu^{\prime})\in M^\prime_k(j,\xi,\eta)$ and so $\xi_i(\sigma\nu)\in\Gamma_k(i)$ for $i=s(j),s(j)+1,\ldots, s(j+1)-2$, and $\eta(\sigma\nu)\in\Theta(j+1)$.  Finally, $\xi_{s(j)-1}(\sigma\nu)=\eta\eta^\prime$, where $\eta\in\Theta(j)$ and $\eta^\prime=\nu_1\nu_2\cdots\nu_{s(j)-jk}\in\Theta^\prime(j+1,\eta)$, which from the definitions of $\Theta(j)$ and $\Theta^\prime(j+1,\eta)$ can be seen to be an element of $\Gamma_k(s(j)-1)$, and this shows \textbf{C}.

\begin{figure}
\begin{center}
\includegraphics[width=0.8\textwidth]{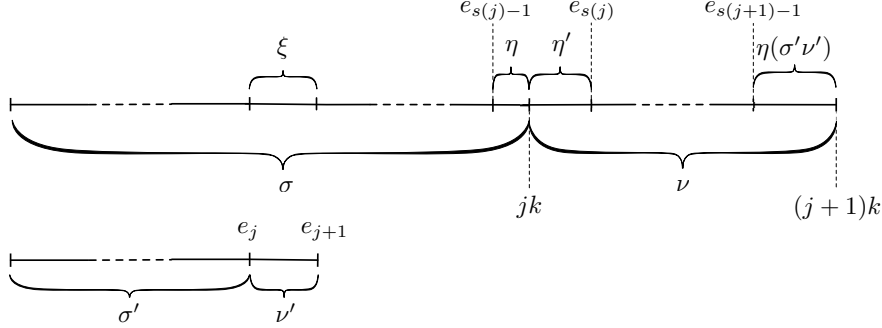}
\end{center}
\caption{A diagram depicting the construction of an element of
$\mathcal{R}_{j+1}$.}
\end{figure}

If $T^j(0)+\tau \not\in \tilde{F}$ then property \textbf{D} is trivial, so assume that $T^j(0)+\tau\in \tilde{F}$.   It is sufficient to show that if $(\sigma\nu_1,\sigma^{\prime}\nu_1^{\prime}), (\sigma\nu_2,\sigma^{\prime}\nu_2^{\prime})$ are offspring of $(\sigma,\sigma^{\prime})\in\mathcal{R}_{j}$ then the projections of $Q(\sigma\nu_1,\sigma\nu_1|_{e_{j+1}},\sigma^{\prime}\nu_1^{\prime})$ and $Q(\sigma\nu_2,\sigma\nu_2|_{e_{j+1}},\sigma^{\prime}\nu_2^{\prime})$ are $b^{(j+1)k}$-separated.   By translating $Q(\sigma,\sigma|_{e_j},\sigma^{\prime})$, if necessary, we may assume that $Q(\sigma,\sigma|_{e_j},\sigma^{\prime})=[0,a^{-T^{j}(0)}b^{j k}]\times [0,b^{j k}]$, and therefore
\begin{eqnarray}
\nonumber \Pi_{\tau}(Q(\sigma\nu_i,\sigma\nu_i|_{e_{j+1}},\sigma^{\prime}\nu_i^{\prime})) & = & \{a^{-\tau}x+y\,:\,(x,y)\in Q(\sigma\nu_i,\sigma\nu_i|_{e_{j+1}},\sigma^{\prime}\nu_i^{\prime}))\} \\
\nonumber  & = & b^{j k}\{a^{-\tau-T^{j}(0)}x+y\,:\,(x,y)\in Q(\nu_i,\xi,\nu_i^{\prime})\} \\
\nonumber  & = & b^{j k}\Pi_{\tau+T^{j}(0)}Q(\nu_i,\xi,\nu_{i}^{\prime}),
\end{eqnarray}
where the fact that $\xi=\xi_j(\sigma)=\xi_j(\sigma\nu_i)$ was used on the penultimate line.  By property (ii) of Corollary \ref{dpthm3}, the family $\{\Pi_{\tau+T^{j}(0)}(Q(\nu_i,\xi,\nu^\prime_i))\}$ is $b^k$-separated; it follows that the family
\[
\{\Pi_{\tau}(Q(\sigma\nu_i,\sigma\nu_i|_{e_{j+1}},\sigma^{\prime}\nu_i^{\prime}))\}
\]
is $b^{(j+1)k}$-separated, as desired.  Property \textbf{E} follows from property (ii) of Corollary \ref{dpthm3} and the fact that $\mathcal{R}_j$ satisfies property \textbf{C}. \end{proof}

We  can now complete the proof of Proposition \ref{keyprop}.
\subsection*{Proof of Proposition \ref{keyprop}}
By assumption $\log a/\log b$ is irrational, which guarantees the irrationality of $\alpha$. Thus, by Weyl's equidistribution theorem, the sequence $\{T^{j}(0)\}_j $ is equidistributed in $[0,1)$.  Applying this to the set $F=\{x\in [0,1)\,:\, \tau+x\in\tilde{F}\}$ yields

\begin{equation}
\label{weyl}
\lim_{j\rightarrow\infty} \frac{1}{j-j_0}|\{j_0< i\leq j\,:\,T^{i}(0)\in F\}|=\mathcal{L}^{1}(F)\geq 1- L\varepsilon^{1/2}.
\end{equation}
Let $$\Lambda_{\tau}=\bigcap_{j>j_0} \bigcup_{(\sigma,\sigma^{\prime})\in\mathcal{R}_j} \Pi_\tau (Q(\sigma,\sigma|_{e_j},\sigma^\prime)),$$
where $\mathcal{R}_j$ is the j'th level of the tree $\mathcal{R}$ guaranteed by Lemma \ref{tree}.  By property \textbf{A} this set is a countable intersection of nested compact sets, so $\Lambda_{\tau}$ is compact and nonempty.  From the construction of $\Lambda_\tau$ it is clear that $\Lambda_{\tau}\subset \Pi_{\tau}(\Lambda)$.

We will estimate the dimension of $\Lambda_{\tau}$, and will do so in a standard way by means of constructing a natural measure supported on $\Lambda_\tau$.  Let $\mu_{\tau}$ be the probability measure which assigns the same mass $|\mathcal{R}_{j}|^{-1}$ to each of the intervals $\Pi_{\tau}(Q(\sigma,\sigma|_{e_j},\sigma^\prime))$ for $(\sigma,\sigma^{\prime})\in\mathcal{R}_j$.  This measure is well defined thanks to properties \textbf{D} and \textbf{E}, and is supported on $\Lambda_{\tau}$.
Let $x\in \Lambda_{\tau}$.  Because of \textbf{D}, for $j>j_0$ the interval $(x-b^{jk}/2,x+b^{jk}/2)$ intersects exactly one interval $\Pi_{\tau}(Q(\sigma,\sigma|_{e_j},\sigma^\prime))$ with $(\sigma,\sigma^{\prime})\in\mathcal{R}_{j}$, and thus
\begin{equation}\label{mass}\mu_{\tau}(x-b^{jk}/2,x+b^{jk}/2)\leq \mu_{\tau}(\Pi_{\tau}(Q(\sigma,\sigma|_{e_j},\sigma^\prime)))=|\mathcal{R}_{j}|^{-1}.\end{equation}
On the other hand, using \textbf{E} once again we obtain
\begin{equation}\label{card}\log |\mathcal{R}_j|\geq |\{ j_0 \leq i < j \,:\, T^{i}(0)\in F\}| \log (c^2\delta^{\prime} \varepsilon b^{-k\gamma}).\end{equation}
Hence we learn from (\ref{weyl}), (\ref{mass}) and (\ref{card}), that if $j^{\prime}$ is large enough then for all $j>j^{\prime}$ we have
$$\log\mu_{\tau}(x-b^{jk}/2,x+b^{jk}/2)\leq -(j-j_0)(1-2L\varepsilon^{1/2})\log(c^2\delta^{\prime} \varepsilon b^{-k\gamma}).$$
We then deduce that
\begin{eqnarray}
\nonumber \underline{\dim}_{loc}(\mu_{\tau},x) & := & \liminf_{j\rightarrow\infty} \frac{\log(\mu_{\tau}(x-b^{jk}/2,x+b^{jk}/2))}{jk\log (b)-\log 2} \\
\nonumber & \geq & \liminf_{j\rightarrow\infty} \frac{(j-j_0)(1-2L\varepsilon^{1/2})\log(c^2\delta^{\prime} \varepsilon b^{-k\gamma})}{\log 2-jk\log b} \\
\nonumber & = & \frac{(1-2L\varepsilon^{1/2})\log(c^2\delta^{\prime} \varepsilon b^{-k\gamma})}{-k\log b},
\end{eqnarray}
for any $x\in \Lambda_\tau$.  From the Mass Distribution Principle (see e.g. \cite[Proposition 2.3]{Falconer1997}) we conclude that
$$\dim_H(\Pi_{\tau}(\Lambda))\geq \dim_H (\Lambda_{\tau}) \geq  \frac{(1-2L\varepsilon^{1/2})\log(c^2\delta^{\prime} \varepsilon b^{-k\gamma})}{-k\log b}.$$
The right hand side can be made arbitrarily close to $\gamma$ by letting $k\rightarrow\infty$ and then $\varepsilon\rightarrow 0$. \qed

\section{Proof of Theorem \ref{thm1}}
\label{sec:main}

To deduce Theorem \ref{thm1} from Proposition \ref{keyprop} we show that any system satisfying the hypotheses of Theorem 1 can be arbitrarily well approximated from inside, in terms of dimension, by a homogeneous uniform fibre system. Moreover the matrix $A=\begin{pmatrix}a&0\\0&b\end{pmatrix}$ in the homogeneous uniform fibre system needs to satisfy $\frac{\log a}{\log b}\notin\Q$. We start with two straightforward lemmas concerning when ratios of real numbers are rational.
\begin{lemma}\label{rat1}
If $x,y,r,t$ are nonzero real numbers and $\frac{x}{y},\frac{x+r}{y+t},\frac{x+2r}{y+2t}\in\Q$ then $\frac{r}{t}\in\Q$
\end{lemma}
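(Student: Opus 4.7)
The plan is to parametrize by the three given rationals. Set $\alpha = x/y$, $\beta = (x+r)/(y+t)$, $\gamma = (x+2r)/(y+2t)$, all in $\mathbb{Q}$ by hypothesis. The defining equations
\[
x = \alpha y, \qquad x + r = \beta(y+t), \qquad x + 2r = \gamma(y+2t)
\]
form a linear system in $x, r, y, t$ with rational coefficients. Eliminating $x$ from the last two equations gives
\[
r = (\beta - \alpha)y + \beta t, \qquad 2r = (\gamma - \alpha)y + 2\gamma t.
\]
Doubling the first and subtracting it from the second yields the single relation
\[
(2\beta - \alpha - \gamma)\, y \;=\; 2(\gamma - \beta)\, t. \qquad (\ast)
\]

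From $(\ast)$ the conclusion is almost immediate, but I would split into two cases to handle the possibility of vanishing coefficients. If $2\beta - \alpha - \gamma \neq 0$, then $(\ast)$ gives $y/t \in \mathbb{Q}$, and substituting into $r = (\beta - \alpha)y + \beta t$ gives $r/t = (\beta - \alpha)(y/t) + \beta \in \mathbb{Q}$. If instead $2\beta - \alpha - \gamma = 0$, then $(\ast)$ forces $(\gamma - \beta)t = 0$; since $t \neq 0$ we get $\gamma = \beta$, and combined with $2\beta = \alpha + \gamma$ this forces $\alpha = \beta$. But $\alpha = \beta$ means $x/y = (x+r)/(y+t)$, i.e.\ $yr = xt$, so $r/t = x/y = \alpha \in \mathbb{Q}$.

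There is no real obstacle here; the only care needed is in the degenerate case, which collapses cleanly because the three ratios being equal forces $r/t$ to equal their common value. The whole argument is a short linear-algebra manipulation, so I would write it up directly in a few lines rather than as a multi-step argument.
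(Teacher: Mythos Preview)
Your proof is correct and is precisely the sort of ``straightforward calculation'' the paper alludes to without writing out; there is no meaningful difference in approach, only in level of detail.
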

\begin{proof}
This follows from straightforward calculations.
\end{proof}
\begin{lemma}\label{rat2}
Consider $\R$ as a vector space over $\Q$ and let $\{a_1,\ldots,a_n\}\subset\R$ be linearly independent. If there exist $p_1,\ldots,p_n,q_1,\ldots q_n\in \Q\backslash\{0\}$ such that
$$\frac{p_1a_1+\ldots+p_na_n}{q_1a_1+\ldots+q_na_n}=\frac{x}{y}$$
where $x,y\in\Z\backslash\{0\}$
then for $1\leq i\leq n$ we have that $\frac{p_i}{q_i}=\frac{x}{y}$.
\end{lemma}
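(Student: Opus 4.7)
The plan is a direct linear-algebra argument exploiting the $\Q$-linear independence of the $a_i$. First I would clear the denominator by cross-multiplying the given equality, which yields
\[
y(p_1 a_1 + \cdots + p_n a_n) = x(q_1 a_1 + \cdots + q_n a_n),
\]
and rearranging gives
\[
\sum_{i=1}^n (y p_i - x q_i)\, a_i = 0.
\]
Since $x,y\in\Z$ and $p_i,q_i\in\Q$, each coefficient $yp_i - xq_i$ lies in $\Q$. The hypothesis of $\Q$-linear independence of $\{a_1,\ldots,a_n\}$ therefore forces $yp_i - xq_i = 0$ for every $i$. Because $q_i\ne 0$ and $y\ne 0$, this rearranges to $p_i/q_i = x/y$, which is the desired conclusion.

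There is no real obstacle here; the only thing to be careful about is that the ratios $p_i/q_i$ and $x/y$ are genuinely well-defined, which is guaranteed by the hypotheses $q_i\ne 0$ and $y\ne 0$. The assumption $x\ne 0$ is not even needed for the argument itself (it is ensured by the fact that $p_i\ne 0$ together with linear independence, so the numerator cannot vanish). The proof is thus a one-line application of $\Q$-linear independence after cross-multiplication, and I would expect the write-up to consist only of the displayed equation above followed by the observation that each $\Q$-coefficient must vanish.
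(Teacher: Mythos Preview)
Your proof is correct and is precisely the routine linear-algebra argument the paper alludes to; the paper's own proof consists only of the sentence ``This follows by routine linear algebra.''
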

\begin{proof}
This follows by routine linear algebra.
\end{proof}
We are now ready to prove the key lemma:
\begin{lemma}\label{approx}
Let
 $\{S_{i,j}\}$ be a Gatzouras-Lalley or Bara\'{n}ski iterated function system and $\Lambda$ be the attractor. Given $\varepsilon>0$, there exists an iterated function system $\{A\cdot x + y_i\}_{i=1}^N$, where $A=\begin{pmatrix}a&0\\0&b\end{pmatrix}$ is a diagonal matrix and the system has uniform horizontal fibres, such that the associated invariant set $\Lambda^\prime$ satisfies $\dim_H \Lambda^\prime > \dim_H \Lambda - \varepsilon$ and $\Lambda^\prime\subset \Lambda$. Moreover, if the iterated function system $\{ S_{i,j}\}$ is of irrational type, we can choose $A$ such that $\frac{\log a}{\log b}\notin\Q$.
\end{lemma}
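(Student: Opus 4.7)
The plan is to construct $\Lambda'$ from a carefully chosen subcollection of the $k$-fold compositions of the given IFS, for large $k$. A composition $S_\omega = S_{\omega_1}\circ\cdots\circ S_{\omega_k}$ has a diagonal linear part whose two entries are products of the one-step horizontal and vertical contractions respectively. If we fix a probability vector $\mathbf{p}$ on $D$ with rational entries of common denominator $k$, then every word $\omega\in D^k$ of type $\mathbf{p}$ shares the \emph{same} linear part $A=\mathrm{diag}(a,b)$, in which $a$ and $b$ are explicit products of the single-step contractions determined by $\mathbf{p}$.

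In the Gatzouras--Lalley setting, the vertical translation of $S_\omega$ depends only on the first-coordinate sequence $\mathbf{i}=(i_1,\ldots,i_k)$, so the rows of the iterated system are indexed by such sequences. Within the words of type $\mathbf{p}$, two compositions share a row if and only if their first-coordinate sequences agree; and for any $\mathbf{i}$ whose symbol counts match the marginal $\mathbf{q}(\mathbf{p})$, the number of extensions $(\mathbf{i},\mathbf{j})$ of type $\mathbf{p}$ is a multinomial coefficient depending only on $\mathbf{p}$, not on the ordering of $\mathbf{i}$. Thus every row contains the same number of maps, so the resulting sub-IFS is automatically of homogeneous uniform fibre type, with attractor contained in $\Lambda$. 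The Bara\'{n}ski case proceeds analogously, with rows indexed by second-coordinate sequences and an analogous multinomial count, possibly after swapping the two coordinates so that the resulting $a$ is smaller than $b$.

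To obtain $\dim_H\Lambda'\geq \gamma-\varepsilon$, I would choose $\mathbf{p}$ so that its contribution to a two-entropy form of the dimension formula lies within $\varepsilon/2$ of $\gamma$. By Stirling's formula, the number of rows $m'$ and of maps per row $n'$ satisfy $\log m'/k\to H(\mathbf{q}(\mathbf{p}))$ and $\log n'/k\to H(\mathbf{p}\,|\,\mathbf{q}(\mathbf{p}))$ as $k\to\infty$ along multiples of the denominator of $\mathbf{p}$. Substituting these into the dimension formula for a homogeneous uniform fibre system, namely $\log m'/(-\log b)+\log n'/(-\log a)$, yields the expression $H(\mathbf{q})/(-\sum q_i\log b_i)+H(\mathbf{p}\,|\,\mathbf{q})/(-\sum p_{ij}\log a_{ij})$. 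Maximizing this two-entropy expression over $\mathbf{p}$ coincides with the Gatzouras--Lalley formula (a Lagrange-multiplier computation on the conditional distributions for fixed $\mathbf{q}$ recovers the implicit parameter $t(\mathbf{q})$), and similarly recovers $d_x$ in the Bara\'{n}ski case; hence a near-optimal rational $\mathbf{p}$ delivers the required dimension bound.

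Finally, to arrange the irrationality condition, observe that $\log a$ and $\log b$ depend linearly on $\mathbf{p}$ through the coefficients $\log a_{ij},\log b_i$ (resp.\ $\log a_i,\log b_j$). Starting from a near-optimal $\mathbf{p}$, I would consider a one-parameter family $\mathbf{p}+s\mathbf{r}$ for small rational perturbations $\mathbf{r}$ compatible with the simplex and divisibility constraints. If $\log a/\log b$ were rational for three distinct values of $s$, Lemma \ref{rat1} would force the ratio $(\mathbf{r}\cdot\log\mathbf{a})/(\mathbf{r}\cdot\log\mathbf{b})$ to be rational; varying $\mathbf{r}$ and invoking Lemma \ref{rat2} would then produce rational relations among the $\log a_{ij}/\log b_i$ (resp.\ $\log a_i/\log b_j$) that are precisely excluded by the definition of irrational type. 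I expect this last step to be the main technical obstacle: the set of admissible $\mathbf{r}$ is constrained by the combinatorial structure of $D$, and the two sub-cases of ``irrational type'' in each of the Gatzouras--Lalley and Bara\'{n}ski definitions must be handled separately, especially the Bara\'{n}ski sub-case (2) in which the pair witnessing irrationality lies outside $D$ and therefore cannot be used directly in $\mathbf{p}$.
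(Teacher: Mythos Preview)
Your construction and dimension estimate are essentially identical to the paper's: fix a near-optimal type vector on $D$ (the paper takes the specific Gibbs vector $q_{ij}=p_i a_{ij}^{t(\mathbf p)}/\sum_l a_{il}^{t(\mathbf p)}$, but any near-optimizer works), form the sub-IFS of $k$-fold compositions of that type, observe the uniform-fibre property via the multinomial count, and use Stirling to recover the two-entropy expression for the dimension.

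The only substantive difference is in the irrationality step. Rather than perturb the type vector $\mathbf p$ as you propose, the paper post-composes the \emph{entire} sub-IFS with $S_{ij}^r$ for $r\in\{0,1,2\}$ and a suitably chosen $(i,j)\in D$, which replaces $(\log a,\log b)$ by $(\log a+r\log a_{ij},\,\log b+r\log b_i)$ and feeds directly into Lemma~\ref{rat1}. This is cleaner than your scheme because it avoids the simplex and divisibility constraints on $\mathbf r$ and costs only $O(1/k)$ in dimension. For sub-case~(2) of irrational type the paper again uses single-map compositions together with Lemma~\ref{rat2}, arguing by cases on whether the relevant $\log a_{ij}$ are $\Q$-independent of $\log a$. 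Your worry about Bara\'{n}ski sub-case~(2) is well placed: the paper does not spell out the Bara\'{n}ski argument at all, saying only that it is ``similar'', and when the witnessing pair $(i,j)$ lies outside $D$ one indeed needs to compose with two maps (one contributing $a_i$, one contributing $b_j$) rather than one --- a wrinkle neither your outline nor the paper makes explicit.
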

\begin{proof} We just give the proof in the Gatzouras-Lalley case since the Bara\'{n}ski case is proved in a similar way. Let $\textbf{p}=(p_1,p_2,\ldots, p_m)$ be the unique probability vector for which the supremum in (\ref{dimH}) is attained, that is
\begin{equation}
\nonumber\dim_H \Lambda =\frac{\sum_{i=1}^m p_i \log p_i}{\sum_{i=1}^m p_i\log b_i}+t(\textbf{p})\end{equation}
where $t(\textbf{p})$ is the unique real number satisfying
\begin{equation}
\nonumber\sum_{i=1}^m p_i \log \left(\sum_{j=1}^{n_i} a_{i j}^{t(\textbf{p})}\right)=0.
\end{equation}
For $(i,j)\in D=\{ (i,j) \,\,:\,\, 1\leq i \leq m,\, 1\leq j \leq n_i\}$ we let
\begin{equation}
\nonumber q_{i j}= \frac{ p_i a_{i j}^{t(\textbf{p}) }}{ \sum_{l=1}^{n_i} a_{i l}^{t(\textbf{p})} }.\end{equation}
For $k\in\mathbb{N}$, set $r(k) = \sum_{(i,j)\in D} \lceil k q_{i j} \rceil$, and let
\begin{equation}
\nonumber\Gamma_k:=\{ \textbf{d}=d_1d_2\cdots d_{r(k)} \in D^{r(k)}\,\,:\,\, |\{1\leq s \leq r(k)\,\,:\,\, d_s=(i,j)\}|=\lceil k q_{i j}\rceil \},\end{equation}
i.e. the set of all strings of length $r(k)$ over the alphabet $D$, for which the number of occurrences of the letter $(i,j)$ is equal to $\lceil k q_{i j} \rceil$.  A simple combinatorial argument shows that
\begin{equation}
\label{card2}
|\Gamma_k |=\frac{r(k)!}{\prod_{(i,j)\in D} \lceil k q_{i j}\rceil !}.\end{equation}
Consider the iterated function system $\{ S_{d_1}\circ S_{d_2} \circ \cdots \circ S_{d_{r(k)}} \}_{\textbf{d}\in\Gamma_k}$, and let its associated attractor be denoted by $\Lambda_k$.  Clearly, $\Lambda_k\subset \Gamma$, and the linear part  of each map has diagonal entries $\prod_{(i,j)\in D} a_{i j }^{\lceil k q_{i j} \rceil}$, $\prod_{(i,j)\in D} b_{i}^{ \lceil k q_{i j} \rceil} $.   Let $\pi: D\rightarrow \{1,\ldots,m\}$ denote the projection onto the second coordinate, and let
\begin{equation}\nonumber \tilde{\Gamma}_k=\{ \pi(d_1)\pi(d_2)\cdots\pi(d_{r(k)})\,\,:\,\, d_{1}d_{2}\cdots d_{r(k)}\in\Gamma_k\},\end{equation} denote the projections of the strings in $\Gamma_k$ onto their second coordinates.   Then we have
\begin{equation}\label{card3} |\tilde{\Gamma}_k| =\frac{r(k)!}{\prod_{i=1}^{m} \left(\sum_{j=1}^{n_i} \lceil k q_{i j} \rceil \right)!}.\end{equation}  For each $\sigma_1 \sigma_2 \cdots \sigma_{r(k)}\in\tilde{\Gamma_k}$ the number of elements $d_1 d_2 \cdots d_{r(k)}\in\Gamma_k$ such that $\pi(d_i)=\sigma_i$ for $i=1,2,\ldots, r(k)$ is equal to $|\Gamma_k|/|\tilde{\Gamma}_k|.$  Therefore, combining Stirling's formula with (\ref{card2}) and (\ref{card3}), we deduce
\begin{eqnarray}
\nonumber \dim_H \Lambda_k &=& \frac{\log |\tilde{\Gamma_k}|}{-\sum_{(i,j)\in D} \lceil k q_{i j} \rceil \log b_i } + \frac{\log |\Gamma_k| - \log |\tilde{\Gamma_k}|}{-\sum_{(i,j)\in D} \lceil k q_{i j} \rceil \log a_{i j } } \\
\nonumber & = & \frac{\sum_{i=1}^m p_i \log p_i}{\sum_{i=1}^m p_i \log b_i } + \frac{\sum_{i=1}^m p_i\log p_i - \sum_{(i,j)\in D} q_{i j} \log q_{i j} }{- \sum_{(i,j)\in D} q_{i j}\log a_{i j} } + o(1) \\
\nonumber & = & \frac{\sum_{i=1}^m p_i \log p_i}{\sum_{i=1}^m p_i \log b_i } + t(\textbf{p})+ o(1)=\dim_H \Lambda + o(1).\end{eqnarray}
This completes the proof of the first part of the Lemma.

For the second part we will assume the system is of irrational type, and let
\[
\left\{f_i(x)=\begin{pmatrix}a&0\\0&b\end{pmatrix}x+y_i\right\}_{i=1}^N
\]
be the iterated function system from the first part with attractor $\Lambda_k$. If property (\ref{it1}) is satisfied then we can find a map $S_{i,j}$ in the original system with $\frac{\log a_{ij}}{\log b_i}\notin\Q$. By Lemma \ref{rat1} one of $\frac{\log a}{\log b},\frac{\log a+\log a_{ij}}{\log b+\log b_i},\frac{\log a+2\log a_{ij}}{\log b+2\log b_i}$ must be irrational. One of the three systems

\[
\left\{f_l(x)=\begin{pmatrix}a&0\\0&b\end{pmatrix}x+y_i\right\}_{l=1}^N,
\]
\[
\left\{f_l(x)=S_{i,j}\circ\left(\begin{pmatrix}a&0\\0&b\end{pmatrix}x+y_i\right)\right\}_{l=1}^N,
\]
\[
\left\{f_l(x)=S_{i,j}^2\circ\left(\begin{pmatrix}a&0\\0&b\end{pmatrix}x+y_i\right)\right\}_{l=1}^N,
\]
will then be the required system. The difference between the dimension of the attractor of the new system and the dimension of $\Lambda_k$ can be made arbitrarily small by letting $k\rightarrow\infty$.

If property (\ref{it2}) is satisfied then we use Lemma \ref{rat2}. We can assume that for each $i,j$ $\frac{\log a_{ij}}{\log b_i}=c_i^{-1}\in\Q$ and that $\frac{\log a}{\log b}=c^{-1}\in\Q$. By using property (\ref{it2}) and considering $\R$ as a vector space over $\Q$ we can find $(k,l)\in D$ such that $\{\log a,\log a_{kl}\}$ is linearly independent. If
$$\frac{\log a+\log a_{kl}}{\log b+\log b_k}=\frac{\log a+\log a_{kl}}{c\log a+c_k\log a_kl}\in\Q$$
then by Lemma \ref{rat2} $c=c_k$ and so $\frac{\log a}{\log b}=\frac{\log a_{kl}}{\log b_k}$ for any $(k,l)\in D$ where $\{\log a_{kl},\log a\}$ is linearly independent. If all $\log a_{ij}$ are linearly independent with $\log a$ then this violates the second part of condition (\ref{it2}) and we can deduce
that
$$\frac{\log a+\log a_{ij}}{\log b+\log b_i}\notin\Q$$
for some $(i,j)\in D$ and we can take our iterated function system as
\[
\left\{f_l(x)=S_{i,j}\circ\left(\begin{pmatrix}a&0\\0&b\end{pmatrix}x+y_i\right)\right\}_{l=1}^N.
\]
Finally if there also exists $(r,s)\in D$ where $\log a_{rs}=q\log a$ for some $q\in\Q$ then if
$$\frac{\log a+\log a_{kl}+q\log a}{c\log a+c_k\log b_{kl}+qc_r\log b}\in\Q$$
we can deduce that $c_r=c$ since from the previous part we can assume that $c_k=c$. For this to be true for all such $(r,s)\in D$ would violate condition (\ref{it2}).
\end{proof}
Using this Lemma and then applying \ref{keyprop} completes the proof of Theorem \ref{thm1}. Note that while it is possible that in the Bara\'{n}ski case we could have $a>b$ when we want to apply Proposition \ref{keyprop}, this is easily dealt with by swapping the $x$ and $y$ coordinates.

\ack{We thank Mark Pollicott for useful discussions}.

\bibliographystyle{abbrv}
\bibliography{projections}

\begin{thebibliography}{10}

\bibitem{Baranski2007}
K.~Bara{\'n}ski.
\newblock Hausdorff dimension of the limit sets of some planar geometric
  constructions.
\newblock {\em Adv. Math.}, 210(1):215--245, 2007.

\bibitem{Bedford1984}
T.~Bedford.
\newblock {\em Crinkly curves, Markov partitions and box dimension in
  self-similar sets}.
\newblock PhD thesis, University of Warwick, 1984.

\bibitem{Moreira1998}
C.~G.~T. de~A.~Moreira.
\newblock Sums of regular {C}antor sets, dynamics and applications to number
  theory.
\newblock {\em Period. Math. Hungar.}, 37(1-3):55--63, 1998.
\newblock International Conference on Dimension and Dynamics (Miskolc, 1998).

\bibitem{Falconer1997}
K.~Falconer.
\newblock {\em Techniques in fractal geometry}.
\newblock John Wiley \& Sons Ltd., Chichester, 1997.

\bibitem{Falconer2003}
K.~Falconer.
\newblock {\em Fractal geometry}.
\newblock John Wiley \& Sons Inc., Hoboken, NJ, second edition, 2003.
\newblock Mathematical foundations and applications.

\bibitem{HochmanShmerkin2009}
M.~Hochman and P.~Shmerkin.
\newblock Local entropy averages and projections of fractal measures.
\newblock {\em Preprint}, 2009.
\newblock Available at http://arxiv.org/abs/0910.1956.

\bibitem{KenyonPeres1996}
R.~Kenyon and Y.~Peres.
\newblock Hausdorff dimensions of sofic affine-invariant sets.
\newblock {\em Israel J. Math.}, 94:157--178, 1996.

\bibitem{GatzourasLalley1992}
S.~P. Lalley and D.~Gatzouras.
\newblock Hausdorff and box dimensions of certain self-affine fractals.
\newblock {\em Indiana Univ. Math. J.}, 41(2):533--568, 1992.

\bibitem{Marstrand1954}
J.~M. Marstrand.
\newblock Some fundamental geometrical properties of plane sets of fractional
  dimensions.
\newblock {\em Proc. London Math. Soc. (3)}, 4:257--302, 1954.

\bibitem{Mattila1995}
P.~Mattila.
\newblock {\em Geometry of sets and measures in {E}uclidean spaces}, volume~44
  of {\em Cambridge Studies in Advanced Mathematics}.
\newblock Cambridge University Press, Cambridge, 1995.
\newblock Fractals and rectifiability.

\bibitem{McMullen1984}
C.~McMullen.
\newblock The {H}ausdorff dimension of general {S}ierpi\'nski carpets.
\newblock {\em Nagoya Math. J.}, 96:1--9, 1984.

\bibitem{Peres1994}
Y.~Peres.
\newblock The self-affine carpets of {M}c{M}ullen and {B}edford have infinite
  {H}ausdorff measure.
\newblock {\em Math. Proc. Cambridge Philos. Soc.}, 116(3):513--526, 1994.

\bibitem{PeresSchlag2000}
Y.~Peres and W.~Schlag.
\newblock Smoothness of projections, {B}ernoulli convolutions, and the
  dimension of exceptions.
\newblock {\em Duke Math. J.}, 102(2):193--251, 2000.

\bibitem{PeresShmerkin2009}
Y.~Peres and P.~Shmerkin.
\newblock Resonance between cantor sets.
\newblock {\em Ergodic Theory Dynam. Systems}, 29(1):201--221, 2009.

\end{thebibliography}
\end{document}